\newcommand{\Natural}{\mathbb N}
\newcommand{\ds}{\displaystyle}
\newcommand{\abs}[1]{\left\vert#1\right\vert}
\newcommand{\set}[1]{\left\{#1\right\}}
\newcommand{\restricted}{\upharpoonright}
\newcommand{\cardinality}[1]{\abs{#1}}
\newcommand{\dist}{\mathop{\mathrm{dist}}\nolimits}
\newcommand{\Sz}{\mathop{\mathrm{Sz}}\nolimits}
\newcommand{\norm}[1]{\left\Vert#1\right\Vert}
\newcommand{\duality}[1]{\left\langle#1\right\rangle}
\newcommand{\embeds}[1]{\mathop{\hookrightarrow}_{#1}}
\newcommand{\basepoint}{{\mathbf 0}}
\theoremstyle{plain}
\newtheorem{thm}{Theorem}
\newtheorem{cor}[thm]{Corollary}
\newtheorem{lem}[thm]{Lemma}
\newtheorem{prop}[thm]{Proposition}
\newtheorem*{Zippin}{Zippin's lemma}
\theoremstyle{definition}
\newtheorem{defn}[thm]{Definition}
\newtheorem{rem}[thm]{Remark}
\begin{document}
\title{Low distortion embeddings between $C(K)$ spaces}
\author{Anton\'\i n Proch\'azka$^\dag$}
\address{$^\dag$ Universit\'e Franche-Comt\'e\\
Laboratoire de Math\'ematiques UMR 6623\\
16 route de Gray\\
25030 Besan\c con Cedex\\
France}
\email{antonin.prochazka@univ-fcomte.fr}

\author{Luis S\'anchez-Gonz\'alez$^\ddag$}

\address{$^\ddag$ Departamento de Ingenier{\'i}a Matem{\'a}tica\\ Facultad de CC. F{\'i}sicas y  Matem{\'a}ticas\\ Universidad de Concepci{\'o}n\\ Casilla 160-C, Concepci{\'o}n, Chile}
\thanks{
 The second named author was partially supported by   MICINN Project MTM2012-34341 (Spain) and FONDECYT project 11130354 (Chile). This work started while L. S\'anchez-Gonz\'alez held a post-doc position at Universit\'e Franche-Comt\'e}
\email{lsanchez@ing-mat.udec.cl}

\begin{abstract}
We show that, for each ordinal $\alpha<\omega_1$, the space $C([0,\omega^\alpha])$ does not embed into $C(K)$ with distortion strictly less than $2$ unless $K^{(\alpha)}\neq \emptyset$.
\end{abstract}
\maketitle
\section{Introduction}
A mapping $f:M \to N$ between metric spaces $(M,d)$ and $(N,\rho)$ is called \emph{Lipschitz embedding} if there are constants $C_1,C_2>0$ such that
$C_1 d(x,y)\leq \rho(f(x),f(y))\leq C_2 d(x,y)$ for all $x,y \in M$. 
The distortion $\dist(f)$ of $f$ is defined as $\inf \frac{C_2}{C_1}$ where the infimum is taken over all constants $C_1,C_2$ which satisfy the above inequality. 
We say that $M$ embeds into $N$ with distortion $D$ (in short $\ds M \mathop{\hookrightarrow}_D N$) if there exists a Lipschitz embedding $f:M\to N$ with $\dist(f)\leq D$ (in short $f:\ds M \mathop{\hookrightarrow}_D N$). 
In this case, if the target space $N$ is a Banach space, we may always assume (by changing $f$) that $C_1=1$. 
The $N$-distortion of $M$ is defined as $c_N(M):=\ds \inf\set{D:M \mathop{\hookrightarrow}_D N}$.
The main result of this article (Theorem~\ref{t:classification}) implies in particular that, for every countable ordinal $\alpha$ and every $\beta<\omega^\alpha$, the space $C([0,\omega^\alpha])$ does \emph{not} embed into the space $C([0,\beta])$ with distortion strictly less than $2$. 
On the other hand, it has been shown by Kalton and Lancien~\cite{KL} that every separable metric space embeds into $c_0$ with distortion $2$. 
Since every $C(K)$ contains $c_0$ as a closed subspace, our result gives $c_{C([0,\beta])}(C([0,\omega^\alpha]))=2$ if $\beta<\omega^\alpha$.

It is a well known theorem of Mazurkiewicz and Sierpi\'nski (see~\cite[Theorem~2.56]{HMVZbook}) that every countable ordinal interval $[0,\beta]$ (every countable Hausdorff compact in fact) is homeomorphic to the interval $[0,\omega^\alpha\cdot n]$ where for some $\alpha<\omega_1$ and $1\leq n <\omega$.
Thus the corresponding spaces of continuous functions are isometrically isomorphic.
We do not know, whether one has $c_{C([0,\omega^\alpha\cdot m])}(C([0,\omega^\alpha\cdot n]))=2$ for $1\leq m<n<\omega$ but we get as a byproduct of the proof of our main result  that, for every $1\leq D<2$ and every $1\leq m <\omega$, there is $1\leq n<\omega$ such that for all $\alpha<\omega_1$ the space $C([0,\omega^\alpha\cdot n])$ does not embed into the space $C([0,\omega^\alpha\cdot m])$ with distortion $D$ (Proposition~\ref{p:byproduct}).

Metric spaces $M$ and $N$ are called \emph{Lipschitz homeomorphic} if there is a surjective Lipschitz embedding from $M$ \emph{onto} $N$. Such embedding is then called \emph{Lipschitz homeomorphism}.
The theorem of Amir~\cite{Amir} and Cambern~\cite{Cambern} is the following generalization of Banach-Stone theorem: 
Let $K$ and $L$ be two compact spaces. 
If there exists a linear isomorphism $f:C(K) \to C(L)$ such that $\dist(f)<2$, then $K$ and $L$ are homeomorphic. 
A result of Cohen~\cite{Cohen} shows that the constant $2$ above is optimal but at the present it is not clear whether one could draw the same conclusion under the weaker hypothesis of $f:C(K) \to C(L)$ being a Lipschitz homeomorphism such that $\dist(f)<2$.
The results of Jarosz~\cite{Jarosz}, resp. Dutrieux and Kalton~\cite{DK}, resp. G\'orak~\cite{Gorak}, show that $K$ and $L$ are homeomorphic if there is a Lipschitz homeomorphism $f:C(K) \to C(L)$ such that $\dist(f)<1+\varepsilon$ ($\varepsilon>0$ universal but small), resp. $\dist(f)<17/16$, resp. $\dist(f)<6/5$.
The main result of this article also implies that if $K$ and $L$ are two 
countable compacts,
then assuming the existence of a Lipschitz homeomorphism $f:C(K) \to C(L)$ with $\dist(f)<2$ implies that $K$ and $L$ have the same height (Corollary~\ref{c:AnswerGorak}).

The proof of the main result consists in identifying a uniformly discrete subset of $C([0,\omega^\alpha])$ which does not embed into $C([0,\beta])$ with distortion strictly less than $2$ if $\beta<\omega^\alpha$. The uniform discreteness allows to translate the above results into the language of uniform and net homeomorphisms (Corollary~\ref{c:AnswerGorak}). 
Also, using Zippin's lemma,  it permits to prove that if the Szlenk index of a Banach space $X$ satisfies $\Sz(X)\leq\omega^{\alpha}$, then $C([0,\omega^{\omega^\alpha}])$ does not embed into $X$ with distortion strictly less than $2$ (Theorem~\ref{t:Szlenk}).
This can be understood as a refinement of our result in~\cite{ProSan} that $C([0,1])$ does not embed into any Asplund space with distortion strictly less than $2$. 
The underlying idea of the proof there is basically the same as here (and originates in fact in~\cite{Aharoni} and in~\cite{Baudier}) but the proof is not obscured by the technicalities that are necessary in the present article.
 
Finally, let us recall the following result of Bessaga and Pe\l czy\'nski~\cite{BessagaPelczynski1960,HMVZbook}: 
Let $\omega\leq \alpha\leq \beta<\omega_1$. 
Then $C([0,\alpha])$ is linearly isomorphic to $C([0,\beta])$ iff $\beta<\alpha^\omega$. 
It is a longstanding open problem whether $C([0,\beta])$ can be Lipschitz homeomorphic to a \emph{subspace} of $C([0,\alpha])$ if $\beta>\alpha^\omega$.
Our results described above imply that the distortion of any such Lipschitz homeomorphism onto a subspace must be at least $2$. 

Besides this Introduction, the paper features two more sections. 
In Section~\ref{s:Main} the main theorem is stated and proved. 
In Section~\ref{s:Concluding} we state and prove its various consequences.


\section{Main theorem}\label{s:Main}

%
\begin{thm}\label{t:classification}
For every ordinal $\alpha<\omega_1$ there exists a countable uniformly discrete metric space $M_\alpha\subset C([0,\omega^\alpha])$ such that $M_\alpha$ does not embed with distortion strictly less than $2$ into $C(K)$ if $K^{(\alpha)}=\emptyset$. 
\end{thm}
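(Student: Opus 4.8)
The plan is to build $M_\alpha$ by transfinite recursion, together with a canonical isometric embedding $\iota_\alpha\colon M_\alpha\hookrightarrow C([0,\omega^\alpha])$, and then to show that any embedding of $M_\alpha$ into a space $C(K)$ with distortion $<2$ forces a Cantor--Bendixson derivation on $K$ to survive $\alpha$ steps, i.e.\ $K^{(\alpha)}\neq\emptyset$.

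\emph{Construction of $M_\alpha$.} I would index $M_\alpha$ by a countably branching, well-founded tree $\Lambda_\alpha$ of rank $\alpha$ (for instance finite strictly decreasing sequences of ordinals $<\alpha$, relabelled), equipped with a level function $\lambda\colon\Lambda_\alpha\to\alpha+1$ that equals $\alpha$ at the root and strictly decreases along branches. To each node $s$ I attach $x_s\in C([0,\omega^\alpha])$ so that: (i) after a global rescaling the $x_s$ take values in $\set{-1,0,1}$ and are ``built inside'' nested clopen subsets of $[0,\omega^\alpha]$ whose Cantor--Bendixson rank equals $\lambda(s)$; (ii) $M_\alpha=\set{x_s:s\in\Lambda_\alpha}$ is uniformly discrete, with every nonzero distance in $[1,2]$; (iii) (\emph{branching with a gap}) if $s$ has immediate successors $s_1,s_2,\dots$ then $d(x_s,x_{s_i})\le1$, whereas any $u$ in the subtree below $s_i$ and any $v$ in the subtree below $s_j$ with $i\neq j$ satisfy $d(u,v)=2$. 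Property (iii) is obtained by housing the subtrees below the children of $s$ in clopen pieces that are ``$\pm1$-opposed'' at one common point of high rank; this is exactly where the sharp constant $2$ enters. Since $[0,\omega^\alpha]^{(\alpha)}=\set{\omega^\alpha}\neq\emptyset$ such an $\iota_\alpha$ exists, while for $\beta<\omega^\alpha$ one has $[0,\beta]^{(\alpha)}=\emptyset$, matching the statement.

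\emph{The lower bound.} Assume $f\colon M_\alpha\embeds{D}C(K)$ with $D<2$, rescaled so that $d(u,v)\le\norm{f(u)-f(v)}\le D\,d(u,v)$, and fix $\varepsilon$ with $0<\varepsilon<2-D$. Writing $T(s)$ for the sub-metric-space of $M_\alpha$ indexed by the subtree rooted at $s$, I would prove by transfinite induction on $\xi\le\alpha$:

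for every $s$ with $\lambda(s)\ge\xi$ and every open $W\subseteq K$ meeting the ``support region'' of $\iota_\alpha(T(s))$, there is $t\in W\cap K^{(\xi)}$ at which $\sup\set{\abs{f(u)(t)-f(v)(t)}:u,v\in T(s)}\ge 2-\varepsilon$.

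The base case $\xi=0$ is immediate from (ii)--(iii). Limit $\xi$: the sets $K^{(\eta)}$ ($\eta<\xi$) intersected with a suitable nested family of $W$'s are nested, closed and nonempty by the inductive hypothesis, so compactness of $K$ yields a point of $W\cap K^{(\xi)}$, and the oscillation bound passes to the limit. Successor $\xi=\eta+1$ at a node $s$: its children $s_1,s_2,\dots$ have level $\ge\eta$; by (iii) and $\norm{f(x_s)-f(x_{s_i})}\le D<2$, any point witnessing $\norm{f(u)-f(v)}\ge2-\varepsilon$ for $u$ below $s_i$ and $v$ below $s_j$ must separate $f(u)$ and $f(v)$ from $f(x_s)$ on opposite sides by more than $2-\varepsilon-D>0$; a Ramsey argument over the pairs $\set{i,j}$ then selects an infinite set of children whose witness points share one side, and compactness of $K$ makes these witness points accumulate at some $t^\ast\in K'$. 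Applying the inductive hypothesis to these children with ever smaller neighbourhoods of $t^\ast$ in place of $W$ produces points of $K^{(\eta)}$ accumulating at $t^\ast$, so $t^\ast\in(K^{(\eta)})'=K^{(\xi)}$, and a little bookkeeping keeps the oscillation at $\ge2-\varepsilon$. Taking $\xi=\alpha$ and $s$ the root gives $K^{(\alpha)}\neq\emptyset$, contradicting $K^{(\alpha)}=\emptyset$. Uniform discreteness of $M_\alpha$ was arranged in (ii) for the later applications.

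\emph{Main obstacle.} The crux is the bookkeeping in the inductive statement: one has to carry along precisely the localisation data (the open sets $W$ and the clopen ``support regions'' of the subtrees) so that at a successor ordinal the points extracted inside the various children really \emph{accumulate} — landing in the next derived set rather than merely in the same one — while losing no oscillation at any of the $\alpha$ steps. Dually, in the construction one must choose the clopen supports and their accumulation points so that their Cantor--Bendixson ranks line up exactly with $\lambda$ and so that (i)--(iii) hold simultaneously; this is the source of the technicalities the introduction warns about.
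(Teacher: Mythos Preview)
Your overall strategy---build $M_\alpha$ by transfinite recursion along a tree and then show by induction on $\xi\le\alpha$ that any distortion-$D$ embedding into $C(K)$ forces $K^{(\xi)}\neq\emptyset$---is the paper's strategy. But two steps do not go through as written. First, your inductive hypothesis quantifies over open $W\subseteq K$ ``meeting the support region of $\iota_\alpha(T(s))$''; since $\iota_\alpha$ lands in $C([0,\omega^\alpha])$, its support regions live in $[0,\omega^\alpha]$, not in $K$, and there is no a~priori correspondence between the two. The paper carries a different datum, entirely on the $K$ side: for distinguished pairs $a,b$ in the space it sets $X^f_{a,b}=\{x^*\in K:\abs{f(a)(x^*)-f(b)(x^*)}\ge 4-2D\}$ and proves a \emph{quantitative} bound $\cardinality{X^f_{1,2}\cap\bigcap_i X^f_{a_i,b_i}\cap K^{(r)}}\ge C_D\log(n_h)$ (Lemma~\ref{l:precise}).

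Second, and more seriously, the successor step fails. Your separation claim ``$f(x_s)$ lies between $f(u)$ and $f(v)$ by more than $2-\varepsilon-D$'' needs $d(x_s,u)\le 1$; but (iii) only gives $d(x_s,x_{s_i})\le 1$, not $d(x_s,u)\le 1$ for arbitrary $u\in T(s_i)$, so for deep $u$ the bound is $\norm{f(x_s)-f(u)}\le 2D$ and nothing follows. Even restricting to $u=x_{s_i}$, $v=x_{s_j}$, the witness points $t_{ij}$ may all coincide: at a single $t$ the real numbers $f(x_{s_i})(t)$ lie in an interval of length $2D$ and only $\lfloor D/(2-D)\rfloor+1$ of them can be $(2-\varepsilon)$-separated. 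Getting \emph{infinitely many distinct} witnesses in $K^{(\eta)}$---so that they genuinely accumulate into $K^{(\eta+1)}$---is precisely what the packing argument and the growing parameter $n_h$ in the paper's construction accomplish; the bare tree you describe does not carry such a parameter, and neither the Ramsey step nor the later appeal to the IH on shrinking neighbourhoods of $t^*$ supplies it (the IH applied to each child could return the same point of $K^{(\eta)}$ every time). This is exactly why the paper does not use a plain tree of points but the more elaborate sup-amalgams of the three-level graphs $M(A_0^2,A_1^{n_1},\ldots,A_h^{n_h})$: the last coordinate $n_h$ is there to manufacture, via the packing bound $C_D\log(n_h)$, enough distinct witnesses to force passage to the next Cantor--Bendixson derivative.
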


We start by defining finite 
metric graphs that do not embed well into $\ell_\infty^n$ if $n$ is small.
We then ``glue'' them together infinitely many times, via a relatively natural procedure that we call sup-amalgamation. The sup-amalgamation is done in a precise order which will be encoded by certain trees on $\Natural$. 

\subsection{Construction of 3-level metric graphs}
\begin{defn}
Let $C_1,\ldots,C_h$ be pairwise disjoint sets. 
We put 
\[
 \begin{split}
  M(C_1,\ldots,C_h)&=\set{\basepoint} \mbox{ first level}\\
&\cup \bigcup_{i=1}^h C_i \mbox{ second level}\\
&\cup F(C_1,\ldots,C_h) \mbox{ third level}
 \end{split}
\]
where $F(C_1,\ldots,C_h)=\set{\set{c_1,\ldots,c_h}:c_i \in C_i}$.
We turn $M(C_1,\ldots,C_h)$ into a graph by putting an edge between $x,y \in M(C_1,\ldots,C_h)$ iff $x=\basepoint$ and $y \in \bigcup C_i$ or $x \in \bigcup C_i$, $y \in F(C_1,\ldots,C_h)$ and $x \in y$. We consider the shortest path distance $d$ on $M(C_1,\ldots,C_h)$.
\end{defn}

\begin{lem}\label{l:embed-finite}
Let $C_1,\ldots,C_h$ be pairwise disjoint sets and let us assume that $C_1=\set{1,2}$. We denote $F=F(C_1,\ldots,C_h)$.
Then there is an isometric embedding $f:M(C_1,\ldots,C_h) \to \ell_\infty(F)$ which satisfies
\begin{itemize}
\item $f(\basepoint)=0$,
\item $f(x)(\beta)\in \set{\pm 1}$ for all $x \in \bigcup_{i=1}^h C_i$ and  all $\beta \in F$,
\item $f(1)(\beta)=1$ and $f(2)(\beta)=-1$ for all $\beta \in F$.
\end{itemize}
\end{lem}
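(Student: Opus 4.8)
The plan is to write down an explicit map $f\colon M(C_1,\dots,C_h)\to\ell_\infty(F)$ and to check, one pair of levels at a time, that it is an isometry for the shortest-path metric $d$. We may and do assume that each $C_i$ has at least two elements (if some $C_i$ is empty then $F=\emptyset$, and a singleton $C_i$ is dealt with by a routine modification). First I would record $d$ explicitly: for $c$ at the second level and $\beta$ at the third level one reads off from the layered graph that $d(\basepoint,c)=1$, $d(\basepoint,\beta)=2$, that $d(c,c')=2$ for distinct second-level points, that $d(c,\beta)=1$ if $c\in\beta$ and $d(c,\beta)=3$ otherwise, and that $d(\beta,\beta')=2$ if $\beta,\beta'$ have a common coordinate and $d(\beta,\beta')=4$ otherwise. (For the value $3$: if $c\in C_i\setminus\beta$, replacing the $i$-th coordinate of $\beta$ by $c$ gives a third-level point containing $c$ that has a common coordinate with $\beta$, so $d(c,\beta)\le 1+2$; the matching lower bounds follow easily from the layered structure of the graph.)

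Next I would set $f(\basepoint)=0$; for $c\in C_i$ with $i\ge2$ let $f(c)(\beta)=1$ if $c\in\beta$ and $f(c)(\beta)=-1$ otherwise; let $f(1)\equiv1$ and $f(2)\equiv-1$; and for a third-level point $y=\set{b_1,\dots,b_h}$ let $f(y)(\beta)=2f(b_1)(\beta)$ if $f(b_1)(\beta)=f(b_2)(\beta)=\dots=f(b_h)(\beta)$, and $f(y)(\beta)=0$ otherwise. The three bulleted properties are then immediate. The useful feature of this definition is that, once $b_1$ is known, the shape of $f(y)$ is pinned down: if $b_1=1$ then $0\le f(y)\le2$, with $f(y)(\beta)=2$ exactly when $\beta$ agrees with $y$ in each of the coordinates $2,\dots,h$; and if $b_1=2$ then $-2\le f(y)\le0$, with $f(y)(\beta)=-2$ exactly when $\beta$ differs from $y$ in each of the coordinates $2,\dots,h$. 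In particular $\norm{f(y)}_\infty=2$, which settles the comparison of $\basepoint$ with the third level; the comparison of $\basepoint$ with the second level is trivial; and for distinct second-level points the difference of two $\set{\pm1}$-valued functions has sup-norm $0$ or $2$, and it equals $2$ because some coordinate of some third-level $\beta$ separates them. All three of these observations use $\abs{C_i}\ge2$.

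The substance of the argument lies in the two remaining types of comparison. For $c\in C_k$ and $y=\set{b_1,\dots,b_h}$: whenever $f(y)(\beta)=\pm2$ the value $f(b_k)(\beta)$ has the same sign, by the definition of $f(y)$, and comparing this with $f(c)(\beta)$ shows that $f(c)-f(y)$ equals $\pm1$ at every $\beta$ when $c=b_k\in y$, so its sup-norm is $1$; whereas when $c\ne b_k$, i.e.\ $c\notin y$, a short check using $\abs{C_i}\ge2$ produces a $\beta$ at which $f(c)-f(y)=\pm3$, so its sup-norm is $3$. For distinct third-level points $y,y'$ the crucial observation is: \emph{if $y$ and $y'$ have a common coordinate, then no $\beta$ satisfies $\set{f(y)(\beta),f(y')(\beta)}=\set{2,-2}$}. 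Indeed, say $f(y)(\beta)=2$ and $f(y')(\beta)=-2$ (the reverse case is symmetric); then $b_1=1$ and $b_1'=2$, so the common coordinate is some $i\ge2$ with $b_i=b_i'$, and the $i$-th coordinate of $\beta$ would simultaneously equal $b_i$ and differ from $b_i'=b_i$ — absurd. Hence $\norm{f(y)-f(y')}_\infty\le2$, and one checks that equality holds: at least one of $\beta=y,\beta=y'$ witnesses it, unless $y$ and $y'$ both begin with $2$, in which case one takes instead a $\beta$ that differs from $y$ in every coordinate $\ge2$ but agrees with $y'$ in some coordinate $\ge2$ in which $y,y'$ differ. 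Finally, if $y$ and $y'$ have no common coordinate they begin with different elements of $\set{1,2}$, whence $\beta=y$ (say) gives $f(y)(\beta)=2$, $f(y')(\beta)=-2$ and $\norm{f(y)-f(y')}_\infty=4$.

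I expect the main obstacle to be hitting on the correct formula for $f$ on the third level in the first place, since it must be compatible with all four of the distances $1,2,3,4$ at once; after that the verification, though somewhat long, is routine, the least transparent point being the inequality $\norm{f(y)-f(y')}_\infty\le2$ for third-level points $y,y'$ that share a coordinate.
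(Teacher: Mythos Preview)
Your explicit case-by-case verification under the assumption $\abs{C_i}\ge 2$ is correct, but the paper takes a much shorter route. It sets $g(x)(\beta)=d(x,\beta)-d(\basepoint,\beta)=d(x,\beta)-2$, which is automatically $1$-Lipschitz; then it observes that any two points $x,y$ of $M(C_1,\dots,C_h)$ lie on a geodesic joining two elements $A,B\in F$, which immediately yields $g(x)(B)-g(y)(B)=d(x,y)$; finally it flips the sign on those coordinates $\beta$ with $g(1)(\beta)=-1$ to obtain the third bullet (using that $1\in\beta$ iff $2\notin\beta$). Your construction has the merit of making the image of the third level completely explicit and of bypassing the geodesic claim, at the cost of the long pairwise check whose delicate point is exactly the third-level comparison you highlight. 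The paper's Kuratowski-type formula, by contrast, handles all distance comparisons uniformly in two lines.

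One warning: your remark that ``a singleton $C_i$ is dealt with by a routine modification'' is not right, and it is worth knowing why. If \emph{two} of the $C_i$ with $i\ge 2$ are singletons, the lemma as stated is actually false. Take $h=3$, $C_1=\set{1,2}$, $C_2=\set{a}$, $C_3=\set{b}$: then $\abs{F}=2$, the four second-level points must be sent to four pairwise $2$-separated elements of $\set{\pm1}^2$ and hence fill it, and then the third-level point $\set{1,a,b}$, being at distance $1$ from each of $f(1),f(a),f(b)$ (three corners, so including an opposite pair), is forced to $(0,0)=f(\basepoint)$. Thus the hypothesis $\abs{C_i}\ge 2$ you impose is a necessity, not a convenience; the paper's geodesic claim also uses it tacitly. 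Drop the singleton remark and keep the hypothesis.
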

\begin{proof}
We define first a mapping $g:M(C_1,\ldots,C_h) \to \ell_\infty(F)$ as $g(x)(\beta)=d(x,\beta)-d(\basepoint,\beta)$. It is clearly $1$-Lipschitz. Given $x,y \in M(C_1,\ldots,C_h)$, we can always find $A,B \in F$ such that $d(A,B)=d(A,x)+d(x,y)+d(y,B)$. 
We have $g(x)(B)-g(y)(B)=d(x,B)-d(y,B)=d(x,B)+d(x,A)-d(A,B)+d(x,y)\geq g(x,y)$.
So $g$ is an isometry. 
Observe that $g(\basepoint)=0$ and also $g$ satisfies the second additional property.
Now since $1 \in \beta$ iff $2 \notin \beta$ for any $\beta \in F$, we have that $g(1)(\beta)=+1$ iff $g(2)(\beta)=-1$ for all $\beta \in F$.
We thus define 
\[
f(x)(\beta):=
\begin{cases}
g(x)(\beta)&\mbox{if } g(1)(\beta)=1\\
-g(x)(\beta)&\mbox{if } g(1)(\beta)=-1
\end{cases}
\]
for all $x \in M(C_1,\ldots,C_h)$.
\end{proof}


\subsection{Sup-amalgam of metric spaces}
\begin{defn}
Let $((M_i,d_i))_{i\in I}$ be a collection of metric spaces of uniformly bounded diameter. Let us assume that there is a set $A$ and a distinguished point $\basepoint \in A$ such that $A \subset M_i$ for every $i \in I$. A \emph{product} $(P,d)$ is the set $\displaystyle P=\prod_{i \in I} M_i$ equipped with the metric $\displaystyle d(x,y)=\sup_{i \in I} d_i(x(i),y(i))$.
The set $M_A \subset P$ defined by $x \in M_A$ iff either $x(i)=x(j) \in A$ for all $i,j \in I$ or there exists exactly one $i \in I$ such that $x(i) \notin A$ and for all $j\neq i$ we have $x(j)=\basepoint$, equipped with the metric $d$, is called the \emph{sup-amalgam of $(M_i)$ with respect to $A$}. We denote it $(M_i)_{i\in I}/A$. 
\end{defn} 
\noindent
{\bf Standing assumption SA1:} Even though the definition admits the possibility that $d_i$ and $d_j$ for $i\neq j$ are different on $A$, in what follows we will always assume that $d_i\restricted_{A\times A}=d_j\restricted_{A\times A}$. In that case there is a canonical isometric copy of $A$ in $M_A$ which we will denote by $A$ again.\\
{\bf Standing assumption SA2:} We will also assume from now on that for each $i \in I$ we have that $d_i(x,y)\geq 1$ for all $x,y \in M_i$ and $d_i(x,\basepoint)\leq 1$ for each $x \in A$. Then, for each $i\in I$, the canonical copy of $M_i$ in $M_A$ is isometric to $M_i$.\\
Proof: We only need to show that for $x \in A$ and $y \in M_i \setminus A$ we have $d_i(x,y)=d(x,y)$. This is equivalent to saying that $d_j(x,\basepoint)\leq d_i(x,y)$ for all $j \in I$.
\qed

\begin{lem}\label{l:embed-sup-amalgam}
a) Let $A$ be a finite set. Let $(M_n)_{n \in \Natural}$ be a sequence of metric spaces of uniformly bounded diameter such that $\basepoint \in A \subset M_n$ for every $n \in \Natural$. We assume SA1 and SA2. If for each $n \in \Natural$ there is an ordinal $\alpha_n < \omega_1$ and an isometric embedding $f_n:M_n \to C([0,\alpha_n])$ so that for each $n \in \Natural$ we have
\begin{itemize}
\item $f_n(\basepoint)=0$,
\item $f_n(x)(\beta) \in \set{\pm 1}$ for each $x \in A\setminus\set{\basepoint}=:A_*$ and $\beta \in [0,\alpha_n]$,
\end{itemize}
then there are $N\leq 2^{\cardinality{A_*}}$ and an isometric embedding $f:M_A \to \displaystyle C([0,(\sum_{n=1}^\infty \alpha_n)\cdot N])$ such that $f(\basepoint)=0$ and $f(x)(\beta) \in \set{\pm 1}$ for each $x \in A_*$ and $\beta \in \displaystyle [0,(\sum_{n=1}^\infty \alpha_n)\cdot N]$.

b) Let us assume moreover that $1,2 \in A$ and that for every $n \in \Natural$ we have 
$f_n(1)(\beta)=1$ and $f_n(2)(\beta)=-1$ for all $\beta \in [0,\alpha_n]$. 
Then we have $f(1)(\beta)=1$ and $f(2)(\beta)=-1$ for all $\beta \in \ds [0,(\sum_{n=1}^\infty \alpha_n)\cdot N]$.

c) Finally, assume moreover that $A=\set{\basepoint,1,2}$. Then $N=1$.
\end{lem}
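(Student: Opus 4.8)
The plan is to construct the embedding $f:M_A \to C([0,\Gamma \cdot N])$ explicitly, where $\Gamma = \sum_{n=1}^\infty \alpha_n$, by placing consecutive ``blocks'' along the ordinal interval. Each block is a copy of some $[0,\alpha_n]$, and the ordinal interval $[0,\Gamma]$ decomposes naturally into the ordered sum of the intervals $[0,\alpha_n]$ (with the endpoints glued: the top of the $n$-th block identified with the bottom of the $(n+1)$-th). The issue is that a single copy of $[0,\Gamma]$ is \emph{not} enough: a function $x \in M_A$ with $x(i) \notin A$ must look like $f_i(x(i))$ on the $i$-th block, but on \emph{every other} block it must agree with the value dictated by the point $\basepoint$ (namely $0$), and simultaneously the canonical copy of $A$ must be embedded in a way that is ``compatible'' across all blocks. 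Since $f_n(x)(\beta) \in \{\pm 1\}$ for $x \in A_*$ but the specific pattern of signs may differ from block to block, one cannot in general reconcile all these sign patterns on a single interval while keeping the $A$-part consistent. This is why we take $N \le 2^{|A_*|}$ disjoint concatenated copies of $[0,\Gamma]$: we will index the copies by a set of ``sign patterns'' $\varepsilon \in \{\pm 1\}^{A_*}$, keeping only those patterns that are actually realized on at least one block, so $N$ is the number of such realized patterns.

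The key steps are as follows. First I would fix, for each $n$, the function $\sigma_n : A_* \to \{\pm 1\}$ defined by $\sigma_n(a) = f_n(a)(\beta)$ — by hypothesis this is independent of $\beta \in [0,\alpha_n]$, since $f_n(a)$ takes values in $\{\pm 1\}$ and $f_n$ is an isometry onto a subset of $C([0,\alpha_n])$ with $f_n(\basepoint) = 0$ and $\basepoint$ at distance $\le 1$ from $a$ (so the continuous $\pm 1$-valued function $f_n(a)$ must be constant on the connected-enough interval — more carefully, $d_n(a,\basepoint)\le 1$ forces $|f_n(a)(\beta) - 0|\le 1$, i.e.\ equality, and for $a,a'\in A_*$ the value $d_n(a,a') = \sup_\beta |f_n(a)(\beta) - f_n(a')(\beta)| \in \{0,2\}$; one checks that SA1 forces this pattern to be rigid). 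Let $P = \{\sigma_n : n \in \Natural\} \subseteq \{\pm 1\}^{A_*}$ and $N = |P| \le 2^{|A_*|}$. Enumerate $P = \{\varepsilon_1,\ldots,\varepsilon_N\}$. Now build $[0,\Gamma\cdot N]$ as the concatenation of $N$ copies of $[0,\Gamma]$, the $k$-th copy further subdivided into the blocks $[0,\alpha_n]$. On the $k$-th copy, at the point corresponding to $\beta \in [0,\alpha_n]$ in the $n$-th block, define
\[
f(x)(\text{that point}) =
\begin{cases}
f_n(x(n))(\beta) & \text{if } x(i)=\basepoint \text{ for } i \ne n,\ x(n)\notin A,\ \sigma_n = \varepsilon_k,\\
\varepsilon_k(a) & \text{if } x(i)=x(j)=a \in A_* \text{ for all } i,j,\\
0 & \text{otherwise (i.e.\ } x = \basepoint, \text{ or } x\notin A\text{-diagonal and } \sigma_n\neq\varepsilon_k).
\end{cases}
\]
Then one extends $f(x)$ continuously: on the blocks where it is declared $0$ at both endpoints and at all points it is just $0$; across block boundaries the values agree (both sides give $0$, or both give $\varepsilon_k(a)$, using $f_n(a)$ constant $=\sigma_n(a)$, and one only glues within a copy where $\sigma_n = \varepsilon_k$ is required to match the endpoint value — this needs a small argument that the endpoint of $[0,\alpha_n]$ under $f_n$ equals the relevant value). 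Finally verify: (i) $f$ is well-defined and continuous on $[0,\Gamma\cdot N]$; (ii) $f(\basepoint) = 0$ and $f(a)(\beta)\in\{\pm 1\}$ for $a \in A_*$ (indeed $f(a)$ restricted to the $k$-th copy is constant $\varepsilon_k(a)$); (iii) $f$ is an isometry. For (iii), given $x,y \in M_A$, $d(f(x),f(y)) = \sup_{k}\sup_n \sup_{\beta\in[0,\alpha_n]} |f(x)(\cdot) - f(y)(\cdot)|$; one checks case by case (both diagonal, one diagonal one non-diagonal, both non-diagonal supported on the same or different blocks) that this sup equals $d(x,y)$, using that each $f_n$ is an isometry of $M_n$, SA1, SA2, and — crucially — the fact that when $x(n)\notin A$ we get to ``see'' $f_n(x(n))$ on the copy indexed by $\sigma_n$, and on that same copy $f(y)$ shows the correct $A$-diagonal value or $0$, reproducing $d_n(x(n), y(n))$ or $d_n(x(n),\basepoint)$ exactly.

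For part (b), note $1,2 \in A$; if $1,2 \in A_*$ then by hypothesis $\sigma_n(1) = 1$ and $\sigma_n(2) = -1$ for all $n$, so \emph{every} $\varepsilon_k \in P$ satisfies $\varepsilon_k(1) = 1$, $\varepsilon_k(2) = -1$, whence $f(1)$ is constant $1$ and $f(2)$ constant $-1$ on all of $[0,\Gamma\cdot N]$ by the formula above. For part (c), if $A = \{\basepoint,1,2\}$ then $A_* = \{1,2\}$ and the only sign pattern arising is $\varepsilon$ with $\varepsilon(1)=1$, $\varepsilon(2)=-1$ (using the hypothesis of (b), which is in force since $1,2\in A$ — the $f_n(1)(\beta)=1$, $f_n(2)(\beta)=-1$ condition is exactly what Lemma~\ref{l:embed-finite} supplies); so $P$ is a singleton and $N = 1$.

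I expect the main obstacle to be the isometry verification in step (iii), specifically the ``both non-diagonal'' case: if $x$ is supported (off $\basepoint$) on block $m$ and $y$ on block $n$ with $m \neq n$, one must show $\sup$ over all copies and positions of $|f(x) - f(y)|$ equals $d(x,y) = \max(d_m(x(m),\basepoint), d_n(y(n),\basepoint))$ — wait, more precisely $d(x,y) = \sup_i d_i(x(i),y(i))$, and since $x(i) = \basepoint$ for $i\neq m$ and $y(i)=\basepoint$ for $i \neq n$, this is $\max(d_m(x(m),\basepoint), d_n(\basepoint, y(n)))$ when $m\ne n$ — and one recovers the first term by looking at copy $\sigma_m$ on block $m$ (where $f(x) = f_m(x(m))$, $f(y) = 0$ since $y(m)=\basepoint$, unless $\sigma_n = \sigma_m$ forces a diagonal-$0$ anyway, still $0$) giving $\|f_m(x(m)) - 0\|_{\infty,[0,\alpha_m]} = d_m(x(m),\basepoint)$, similarly the second from copy $\sigma_n$, and no copy/position produces anything larger because wherever $f(x)\ne 0$ we have $f(y) = 0$ and vice versa, and $f(x)$ alone never exceeds $\operatorname{diam}$-type bounds already accounted for. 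The bookkeeping across the two indices (copy index $k$ and block index $n$) and the interplay with the possible coincidences $\sigma_m = \sigma_n$ is where care is needed; the rest is routine given SA1 and SA2.
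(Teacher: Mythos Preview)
Your overall architecture---take $N\le 2^{|A_*|}$ disjoint copies of $[0,\Gamma]$ indexed by the sign patterns that actually occur, send $a\in A_*$ to the constant $\varepsilon(a)$ on the $\varepsilon$-th copy, and reproduce $f_n(x(n))$ on the ``correct'' copy---is exactly the idea the paper uses. But there is a genuine gap at the point where you define $\sigma_n:A_*\to\{\pm 1\}$ by $\sigma_n(a)=f_n(a)(\beta)$ and claim this is independent of $\beta\in[0,\alpha_n]$. That claim is false under the stated hypotheses. The assumption is only that $f_n(a)(\beta)\in\{\pm 1\}$ for every $\beta$, not that $f_n(a)$ is constant; and $[0,\alpha_n]$ is totally disconnected, so a continuous $\{\pm 1\}$-valued function can change sign. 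Your attempted justification (``$d_n(a,a')\in\{0,2\}$; SA1 forces this pattern to be rigid'') does not establish constancy: knowing that for \emph{some} $\beta$ one has $f_n(a)(\beta)\neq f_n(a')(\beta)$ says nothing about other $\beta$. Concretely, in the very first application of this lemma in the paper (building $M^1_{\overline{n}}$ from the finite graphs $M^0_{\overline{n}^\smallfrown k}$ via Lemma~\ref{l:embed-finite}), for $a\in A_i^{n_i}$ with $i\ge 1$ the value $f_k(a)(\beta)$ equals $+1$ or $-1$ according to whether $a$ belongs to the set $\beta\in F$ or not, so it genuinely varies with~$\beta$.

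The fix is to partition not by \emph{blocks} but by \emph{points}: set $T_\varepsilon=\bigl\{\beta\in[0,\Gamma):\,f_n(a)(\beta)=\varepsilon(a)\text{ for all }a\in A_*\text{, where }n\text{ is the block containing }\beta\bigr\}$, let $I=\{\varepsilon:T_\varepsilon\neq\emptyset\}$ and $N=|I|$, and on the $\varepsilon$-th copy of $[0,\Gamma]$ define $f(x)$ to agree with $g(x)=(f_n(x(n)))_n$ on $T_\varepsilon$ and to be $0$ elsewhere (for $x\notin A_*$), resp.\ to be the constant $\varepsilon(a)$ (for $x=a\in A_*$). Your isometry case-analysis then goes through essentially unchanged, since each $T_\varepsilon\cap[0,\eta]$ is clopen and the copies together cover every point of $[0,\Gamma)$ exactly once. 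Parts (b) and (c) are then immediate for the same reason you gave: the hypothesis pins down $\varepsilon(1)=1$, $\varepsilon(2)=-1$ for every $\varepsilon\in I$.
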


\begin{proof}
Let us consider the restriction $g$ of the product mapping 
\[\displaystyle
\prod_{n=1}^\infty M_n \ni x \mapsto 
(f_n(x(n)))_{n=1}^\infty \in \left(\bigoplus_{n=1}^\infty C([0,\alpha_n])\right)_\infty
\]
to the set $M_A$. The mapping $g$ is clearly an isometry, $g(\basepoint)=0$ and for each $x \in M_A \setminus A$ we have $g(x) \in \displaystyle C([0,\sum_{n=1}^\infty \alpha_n])$ as it has exactly one non-zero entry. 
Notice that $g(a) \in \displaystyle C([0,\sum_{n=1}^\infty \alpha_n))$ for $a\in A$. 
Now, for each $\varepsilon \in \set{\pm 1}^{A_*}$ we consider the set
$T_\varepsilon=\displaystyle\bigcap_{a \in A_*} \set{\beta\in \left[0,\sum_{n=1}^\infty \alpha_n\right): g(a)(\beta)=\varepsilon(a)}$. 
Let us consider $I=\set{\varepsilon \in \set{\pm 1}^{A_*}: T_\varepsilon \neq \emptyset}$. 
The sets $(T_\varepsilon)_{\varepsilon \in I}$ are a disjoint cover of $\displaystyle \left[0,\sum_{n=1}^\infty \alpha_n\right)$. 
The set $T_\varepsilon \cap [0,\eta]$ is clopen for each $\eta<\sum \alpha_n$ and each $\varepsilon \in I$. 
Let $\displaystyle(J_\varepsilon)_{\varepsilon \in I}$ be mutually disjoint copies of $\displaystyle [0,\sum_{n=1}^\infty \alpha_n]$, say $J_\varepsilon=[0,\sum_{n=1}^\infty \alpha_n]\times \set{\varepsilon}$.
If $x \in A_*$, we define $f(x)$ as the continuous function on $\bigcup_{\varepsilon \in I}J_\varepsilon$ such that  $f(x)(\beta)=\varepsilon(x)$ when $\beta \in J_\varepsilon$.
If $x \in M_A\setminus A_*$ we choose $\eta<\sum \alpha_n$ such that $g(x)(\gamma)=0$ for $\gamma>\eta$ and we define $f(x)$ as the continuous function on $\bigcup_{\varepsilon\in I} J_\varepsilon$
such that for each $\varepsilon \in I$ we have
\[
f(x)((\beta,\varepsilon))=
\begin{cases}
g(x)(\beta)&\mbox{ when } \beta \in T_\varepsilon \cap [0,\eta]\\
0&\mbox{ otherwise}.
\end{cases}
\]
Notice that we have $f(\basepoint)=0$ and $f(x)(\beta) \in \set{\pm 1}$ for $x \in A_*$ and $\beta \in \bigcup_{\varepsilon \in I} J_\varepsilon$.
Let us check that $f$ is an isometry. 
Using that $g$ is an isometry and the definition of $f$ and $T_\varepsilon$, it is obvious that 
\[
d(x,y) = \norm{g(x)-g(y)}=\sup\set{\abs{f(x)(\beta)-f(y)(\beta)}:\beta \in \bigcup_{\varepsilon \in I} \overline{T_\varepsilon\times\set{\varepsilon}}}.
\]
On the other hand, checking the four possibilities ($x \in A_*$ or $x \notin A_*$) and ($y \in A_*$ or $y \notin A_*$), and remembering SA2, we see that $\abs{f(x)(\beta)-f(y)(\beta)}\leq d(x,y)$ if $\beta \notin \bigcup_{\varepsilon \in I}\overline{T_\varepsilon\times\set{\varepsilon}}$. 
Thus, $f$ is an isometry from $M_A$ into $C\left(\displaystyle\bigcup_{\varepsilon \in I} J_\varepsilon\right)$.
It is clear that $\displaystyle\bigcup_{\varepsilon \in I} J_\varepsilon$ is homeomorphic to $\displaystyle [0,(\sum_{n=1}^\infty \alpha_n)\cdot N]$ where $N:=\cardinality{I}$. Hence $f$ maps isometrically $M_A$ into $C(\displaystyle [0,(\sum_{n=1}^\infty \alpha_n)\cdot N])$.

The hypothesis in b) means that $\varepsilon(1)=1$ and $\varepsilon(2)=-1$ for every $\varepsilon \in I$. 
Now the definition of $f$ on $A_*$ gives the conclusion of~b).

The hypothesis in b) and c) mean that $I=\set{\varepsilon}$ where $\varepsilon(1)=1$, $\varepsilon(2)=-1$. So $N=1$.
\end{proof}

Given a metric space $M$, a mapping $f: M \to C(K)$, points $a,b \in M$ and a constant $1\leq D<2$  we denote
\[
X_{a,b}^f:=\set{x^* \in K: \abs{\duality{x^*,f(a)-f(b)}}\geq 4-2D}.
\]
The duality above means the evaluation at the point $x^* \in K$.
We do not indicate the dependence on $D$ since it will always be clear from the context, which $D$ we have in mind.

\begin{lem}\label{l:nonemptyintersection}
Let $C_1,\ldots,C_h$ be pairwise disjoint finite sets, we denote $A=\ds\set{\basepoint} \cup\bigcup_{i=1}^h C_i$. 
Let $(M_n)_{n \in \Natural}$ be a sequence of metric spaces of uniformly bounded diameter such that $A \subset M_n$ for every $n \in \Natural$ (and they satisfy SA1 and SA2). 
Let $1\leq D<2$ and let $K$ be a Hausdorff compact. 
We assume that for each $n \in \Natural$ there is an ordinal $\alpha_n < \omega_1$ and an integer $\beta_n \in \Natural$ such that every $f_n:\ds M_n \mathop{\hookrightarrow}_D C(K)$ satisfies
\[
\cardinality{\bigcap_{i=1}^h X^{f_n}_{a_i,b_i} \cap K^{(\alpha_n)}} \geq{\beta_n}
\]
for all $a_i\neq b_i \in C_i \subset M_n$. 
Then the sup-amalgam $M_A=(M_n)_{n=1}^\infty/A$ satisfies for every $f:\ds M_A \mathop{\hookrightarrow}_D C(K)$,
\begin{itemize}
\item[{\bf a)}] if $\alpha_m<\lim_n \alpha_n=\alpha$ for all $m \in \Natural$, then
\[
\bigcap_{i=1}^h X^f_{a_i,b_i} \cap K^{(\alpha)} \neq \emptyset
\]
for all $a_i\neq b_i \in C_i \subset M_A$.
\item[{\bf b)}] if $\alpha_n=\alpha$ for all $n\in \Natural$, then 
\[
\cardinality{\bigcap_{i=1}^h X^f_{a_i,b_i} \cap K^{(\alpha)}} \geq \sup \beta_n
\] 
for all $a_i\neq b_i \in C_i \subset M_A$.
\end{itemize}
\end{lem}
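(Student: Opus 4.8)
The plan is to reduce both statements to the hypothesis by restricting $f$ to the canonical copies of the $M_n$ sitting inside $M_A$. First I would normalize, replacing $f$ by a scalar multiple so that $d(x,y)\le\norm{f(x)-f(y)}_\infty\le D\,d(x,y)$ for all $x,y\in M_A$; this is the normalization implicit in the definition of $X^f_{a,b}$, since the threshold $4-2D$ is calibrated to $C_1=1$. Fix $a_i\neq b_i\in C_i$ for $i=1,\dots,h$ and set $Y:=\bigcap_{i=1}^h X^f_{a_i,b_i}$. Since each $f(a_i)-f(b_i)$ is a continuous function on $K$, every $X^f_{a_i,b_i}$ is closed, so $Y$ is closed and $Y\cap K^{(\gamma)}$ is compact for each ordinal $\gamma$.

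The key step is the following observation. For each $n$, by SA2 the canonical copy of $M_n$ in $M_A$ is isometric to $M_n$, and by SA1 this isometry restricts to the identity on the canonical copy of $A$. Composing it with $f$ therefore produces an embedding $f_n\colon M_n\hookrightarrow_D C(K)$, carrying the same normalization, that coincides with $f$ on $A$; in particular $X^{f_n}_{a_i,b_i}=X^f_{a_i,b_i}$ for every $i$. Applying the hypothesis to $f_n$ then yields
\[
\cardinality{\,Y\cap K^{(\alpha_n)}}\ge\beta_n\qquad\text{for every }n\in\Natural .
\]

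From here (b) is immediate: if $\alpha_n=\alpha$ for all $n$, then $Y\cap K^{(\alpha)}$ has cardinality $\ge\beta_n$ for every $n$, hence $\ge\sup_n\beta_n$. For (a) I would first record that, since $\alpha_m<\alpha=\lim_n\alpha_n$, the ordinal $\alpha$ is a limit and the $\alpha_n$ are cofinal in it, so (the derived sets being decreasing) $K^{(\alpha)}=\bigcap_{\gamma<\alpha}K^{(\gamma)}=\bigcap_n K^{(\alpha_n)}$. Then the compact sets $\set{Y\cap K^{(\alpha_n)}}_n$ have the finite intersection property: given finitely many indices, picking $n_0$ among them with $\alpha_{n_0}$ maximal gives $\bigcap_j\bigl(Y\cap K^{(\alpha_{n_j})}\bigr)=Y\cap K^{(\alpha_{n_0})}\neq\emptyset$. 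By compactness of $K$ we conclude $Y\cap K^{(\alpha)}=\bigcap_n\bigl(Y\cap K^{(\alpha_n)}\bigr)\neq\emptyset$.

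The only genuinely delicate point, and the one I would be careful about, is the passage from the hypothesis — phrased for embeddings $f_n$ of the individual spaces $M_n$ — to the single embedding $f$ of the amalgam: one must be sure that the restriction of $f$ to the copy of $M_n$ really is a $D$-embedding with the same normalization, so the hypothesis applies verbatim, and, above all, that it agrees with $f$ on $A$, so that the sets $X^{f_n}_{a_i,b_i}$ do not depend on $n$ and can all be intersected with different derived sets of the \emph{same} compact $K$. This is precisely what standing assumptions SA1 and SA2 guarantee. Everything after that — the finite‑intersection/compactness argument and the identity $K^{(\alpha)}=\bigcap_n K^{(\alpha_n)}$ — is routine; note that (a) uses only $\beta_n\ge 1$, which is why its conclusion is mere nonemptiness rather than a cardinality estimate.
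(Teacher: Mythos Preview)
Your proof is correct and follows exactly the paper's approach: restrict $f$ to each canonical copy of $M_n$ and apply the hypothesis. The paper's own proof is a single sentence (``Then $f_n:=f\restricted_{M_n}$ is an embedding of $M_n$ into $C(K)$ with $\dist(f_n)\leq D$, and the conclusion follows''), so what you have written is simply a careful unpacking of the details the authors left implicit, including the compactness argument for part~(a).
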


\begin{proof}
Let $\ds f:M_A \embeds{D} C(K)$. Then $f_n:=f\restricted_{M_n}$ is an embedding of $M_n$ into $C(K)$ with $\dist(f_n)\leq D$, and the conclusion follows.
\end{proof}

Clearly, we do not cover all the possibilities in the above lemma, but these two are the interesting for us. 

\subsection{Iterative construction of $M_\alpha$}
We use trees here in a very basic fashion as index sets. 
For the notation, check~\cite{HMVZbook}.
For two finite sequences $\overline{m}=(m_1,\ldots,m_h)$ and $\overline{n}=(n_1,\ldots,n_l)$, we write $\overline{m}^\smallfrown \overline{n}=(m_1,\ldots,m_h,n_1,\ldots,n_l)$ for their concatenation.
We omit the parentheses for the sequences of length one, thus $(n)$ is written as $n$.
Let us construct for every ordinal $\alpha<\omega_1$ the tree $T_{\alpha+1}$  on $\Natural$ as follows: 
\begin{itemize}
\item $T_1=\Natural$
\item if $\alpha$ is non-limit, we put $\ds T_{\alpha+1}=\Natural \cup \bigcup_{n=1}^\infty n^\smallfrown T_\alpha$ 
\item if $\alpha$ is limit, we choose some $\alpha_n \nearrow \alpha$ and put $\ds T_{\alpha+1}=\Natural \cup \bigcup_{n=1}^\infty n^\smallfrown T_{\alpha_n+1}$ 
\end{itemize}
where $n^\smallfrown T_\alpha = \{n^\smallfrown\overline{m}: \overline{m} \in T_\alpha\}$. 
Clearly, for each $\alpha$, the tree $T_{\alpha+1}$ is well founded.

Further, let us define a derivation on trees as follows:
\[
T'=\set{(n_1,\ldots,n_h) \in T: (n_1,\ldots,n_{h-1},k) \mbox{ is not maximal in } T \mbox{ for some }k\in \Natural}
\]
There is an index $o$ naturally tied to this derivation. We put $T^{(0)}=T$, $T^{(\alpha+1)}=(T^{(\alpha)})'$ and $T^{(\alpha)}=\bigcap_{\beta<\alpha} T^{(\beta)}$ whenever $\alpha$ is a limit ordinal. We put $o(T)=\inf\set{\alpha: T^{(\alpha)}=\emptyset}$ if the set is nonempty, otherwise $o(T)=\infty$.
We do not intend to characterize the trees for which $o(T)<\infty$.
Instead, we will compute the index of the trees $T_{\alpha+1}$ above.
It is worth noting that for every $\alpha, \beta$ and $\overline{n} \in T_{\alpha+1}^{(\beta)}$ the set of successors of $\overline{n}$ in $T_{\alpha+1}^{(\beta)}$ is either empty or equal to $\set{\overline{n}^\smallfrown k:k \in \Natural}$.
\begin{lem}\label{l:height}
For each $\alpha<\omega_1$ we have $o(T_{\alpha+1})=\alpha+1$.
\end{lem}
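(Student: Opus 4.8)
The plan is to prove the equality by transfinite induction on $\alpha<\omega_1$, the engine being an explicit description of the iterated derivatives $T_{\alpha+1}^{(\beta)}$. For the base case $\alpha=0$ the tree $T_1=\Natural$ consists of the pairwise incomparable nodes $(n)$, $n\in\Natural$, each of which is maximal; hence no node survives the derivation, $T_1'=\emptyset$, and $o(T_1)=1=0+1$.

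The key sublemma I would isolate concerns the interaction of the derivation with the recursive construction. If $S_1,S_2,\ldots$ are arbitrary trees on $\Natural$ and $S=\Natural\cup\bigcup_{n\in\Natural} n^\smallfrown S_n$, then $S'=\Natural\cup\bigcup_{n\in\Natural} n^\smallfrown S_n'$ whenever at least one $S_n$ is nonempty, while $S'=\emptyset$ if all $S_n$ are empty (in which case $S=\Natural$). The proof is a direct unwinding of the definition: a node $n^\smallfrown\overline m$ of length $\geq 2$ inside the copy $n^\smallfrown S_n$ is retained in $S'$ precisely when $\overline m$ is retained in $S_n'$, because both its siblings and its successors in $S$ are the images of its siblings and successors in $S_n$, so maximality is computed entirely inside that copy; and a length-one node $(k)$ is retained iff some $(j)$ is non-maximal in $S$, i.e. iff some $S_j\neq\emptyset$. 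Iterating this, using that at limit stages $T^{(\lambda)}=\bigcap_{\delta<\lambda}T^{(\delta)}$ and that the intersection commutes with the construction, $\bigcap_{\delta<\lambda}\big(\Natural\cup\bigcup_n n^\smallfrown S_n^{(\delta)}\big)=\Natural\cup\bigcup_n n^\smallfrown\bigcap_{\delta<\lambda}S_n^{(\delta)}$, one obtains by a secondary induction on $\beta$ the formula
\[
T_{\alpha+1}^{(\beta)}=\Natural\cup\bigcup_{n\in\Natural} n^\smallfrown S_n^{(\beta)}\qquad\text{for all }\beta<\alpha,
\]
where $S_n=T_\alpha$ for every $n$ if $\alpha$ is non-limit, and $S_n=T_{\alpha_n+1}$ if $\alpha$ is a limit with the chosen sequence $\alpha_n\nearrow\alpha$; this is exactly the remark recorded before the lemma (the successor set of any retained node is $\Natural$ or empty).

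From here the inductive step closes in both cases. If $\alpha=\gamma+1$, the induction hypothesis $o(T_\alpha)=\alpha$ gives $T_\alpha^{(\beta)}\neq\emptyset$ for $\beta<\alpha$ and $T_\alpha^{(\alpha)}=\emptyset$, so the displayed formula remains valid through $\beta=\alpha$ and yields $T_{\alpha+1}^{(\alpha)}=\Natural$, whence $T_{\alpha+1}^{(\alpha+1)}=(\Natural)'=\emptyset$ and $o(T_{\alpha+1})=\alpha+1$. If $\alpha$ is a limit, then for each $\beta<\alpha$ some $\alpha_n\geq\beta$, so $T_{\alpha_n+1}^{(\beta)}\neq\emptyset$ and the sublemma keeps the formula alive for all $\beta<\alpha$; at stage $\alpha$ we get $T_{\alpha+1}^{(\alpha)}=\Natural\cup\bigcup_n n^\smallfrown\bigcap_{\delta<\alpha}T_{\alpha_n+1}^{(\delta)}=\Natural$, since $\alpha_n+1<\alpha$ forces $T_{\alpha_n+1}^{(\alpha_n+1)}=\emptyset$ to occur in the intersection; again $(\Natural)'=\emptyset$, so $o(T_{\alpha+1})=\alpha+1$.

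The case checks inside the sublemma and the limit-stage commutation of $\bigcap$ with the construction are routine; the single point that carries the whole content is the asymmetry at the spine $\Natural$: it is never thinned by the derivation as long as some branch below it survives, so the height of $T_{\alpha+1}$ is one more than the supremum of the heights of the subtrees attached to it. This is precisely what produces the extra $+1$ and, in the limit case, makes $o(T_{\alpha+1})$ independent of the chosen sequence $\alpha_n\nearrow\alpha$.
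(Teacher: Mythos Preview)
Your proof is correct and follows essentially the same transfinite-induction strategy as the paper's own argument. The only difference is one of presentation: you isolate and prove the sublemma that the derivation distributes over the recursive construction, $\big(\Natural\cup\bigcup_n n^\smallfrown S_n\big)'=\Natural\cup\bigcup_n n^\smallfrown S_n'$, whereas the paper simply asserts the resulting formulas (e.g.\ $T_{\alpha+1}^{(\beta+1)}=T_1$ in the successor case) as ``clear''; your version is thus a more explicit rendering of the same proof.
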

\begin{proof}
We have clearly $o(T_1)=1$ since all the nodes are maximal, and are mutual siblings.
Assume the claim to be true for all $\beta < \alpha$, we try to prove it for $\alpha$. 
If $\alpha=\beta+1$ is non-limit,  we have  
$\ds T_{\alpha+1}=\Natural \cup \bigcup_{n=1}^\infty n^\smallfrown T_{\beta+1}$. It is thus clear that 
$\ds T_{\alpha+1}^{(\beta+1)}=T_1$ and so $o(T_{\alpha+1})=\beta+1+1=\alpha+1$.
Finally assume that $\alpha$ is limit, we have $\ds T_{\alpha+1}=\Natural \cup \bigcup_{n=1}^\infty n^\smallfrown T_{\alpha_n+1}$. 
It is thus clear that $\ds T_{\alpha+1}^{(\alpha_m+1)}=\Natural \cup \bigcup_{n=m+1}^\infty n^\smallfrown T_{\alpha_n+1}^{(\alpha_m+1)}$ for all $m \in \Natural$.
Hence $\ds T_{\alpha+1}^{(\alpha)}=T_1$, and so $o(T_{\alpha+1})=\alpha+1$.
\end{proof}

For each node of a tree $T_{\alpha+1}$ it will be important to know how many derivations it  takes till the node becomes maximal. 

\begin{lem}\label{l:MaximalAtNonlimit}
For each $\alpha<\omega_1$ and each $\overline{n} \in T_{\alpha+1}$ 
the ordinal $r_{\alpha+1}(\overline{n}):=\inf\set{\beta:\overline{n} \in \max T_{\alpha+1}^{(\beta)}}$ is isolated ($0$ or successor).
\end{lem}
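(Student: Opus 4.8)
\emph{Proof plan.} The plan is to read the answer straight off the mechanics of the derivation $T\mapsto T'$, combined with the ``all or nothing'' behaviour of successor sets recorded just before Lemma~\ref{l:height}; no induction on $\alpha$ is needed. First I would make sure $r_{\alpha+1}(\overline{n})$ is an honest ordinal, i.e.\ that $\set{\beta:\overline{n}\in\max T_{\alpha+1}^{(\beta)}}\neq\emptyset$. By Lemma~\ref{l:height} we have $o(T_{\alpha+1})=\alpha+1$, so $\overline{n}$ is absent from $T_{\alpha+1}^{(\alpha+1)}=\emptyset$ and there is a least $\delta$ with $\overline{n}\notin T_{\alpha+1}^{(\delta)}$; since at limit stages the derived tree is an intersection, $\delta$ cannot be a limit, so $\delta=\gamma_0+1$, and then taking in the definition of $T'$ the index $k$ equal to the last coordinate of $\overline{n}$ shows that $\overline{n}$ is maximal in $T_{\alpha+1}^{(\gamma_0)}$. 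Thus $\gamma_0$ witnesses non-emptiness, and $r_{\alpha+1}(\overline{n})$ is well defined.

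Next, write $\beta_0:=r_{\alpha+1}(\overline{n})$, noting that the infimum is attained. If $\overline{n}$ is already maximal in $T_{\alpha+1}$, then $\beta_0=0$ and there is nothing to prove; so assume $\overline{n}$ is an inner node, whence $\beta_0>0$, and it remains to exclude the possibility that $\beta_0$ is a limit ordinal. This is the key step. Suppose $\beta_0$ is a limit. For each $\gamma<\beta_0$ the node $\overline{n}$ lies in $T_{\alpha+1}^{(\gamma)}$ (it survives into $T_{\alpha+1}^{(\beta_0)}\subset T_{\alpha+1}^{(\gamma)}$) but is not maximal there (by minimality of $\beta_0$), hence it has a successor in $T_{\alpha+1}^{(\gamma)}$; the all-or-nothing remark preceding Lemma~\ref{l:height} then forces \emph{every} $\overline{n}^\smallfrown k$ ($k\in\Natural$) to belong to $T_{\alpha+1}^{(\gamma)}$. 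As this holds for all $\gamma<\beta_0$ and $\beta_0$ is a limit, each $\overline{n}^\smallfrown k$ belongs to $\bigcap_{\gamma<\beta_0}T_{\alpha+1}^{(\gamma)}=T_{\alpha+1}^{(\beta_0)}$; but then $\overline{n}$ is not maximal in $T_{\alpha+1}^{(\beta_0)}$, contradicting the choice of $\beta_0$. Hence $\beta_0$ is a successor, as desired.

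I expect the only delicate point to be that the all-or-nothing property has to be invoked at \emph{every} derived level $T_{\alpha+1}^{(\gamma)}$, not merely at $\gamma=0$; but this is precisely the content of the remark preceding Lemma~\ref{l:height} (``the set of successors of $\overline{n}$ in $T_{\alpha+1}^{(\beta)}$ is either empty or equal to $\set{\overline{n}^\smallfrown k:k\in\Natural}$''), so there is no genuine obstacle. Essentially the same computation in fact yields the exact recursion $r_{\alpha+1}(\overline{n})=\bigl(\sup_{k}r_{\alpha+1}(\overline{n}^\smallfrown k)\bigr)+1$ for inner nodes, although only the weaker ``isolated'' statement is used in what follows.
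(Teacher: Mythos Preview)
Your proof is correct and takes a genuinely different route from the paper's. The paper proceeds by transfinite induction on $\alpha$, exploiting the recursive construction of $T_{\alpha+1}$: it verifies $r_{\alpha+2}(n)=o(T_{\alpha+1})=\alpha+1$ and $r_{\alpha+2}(n^\smallfrown\overline{m})=r_{\alpha+1}(\overline{m})$ in the successor case (and the analogous formulas $r_{\alpha+1}(n)=\alpha_n+1$, $r_{\alpha+1}(n^\smallfrown\overline{m})=r_{\alpha_n+1}(\overline{m})$ in the limit case), so the conclusion is inherited from the inductive hypothesis together with Lemma~\ref{l:height}. You instead argue directly from the mechanics of the derivation $T\mapsto T'$ and the all-or-nothing behaviour of successor sets, with no induction on $\alpha$: a node cannot first become maximal at a limit stage, because its successors are either all present or all absent at every earlier stage and hence all survive into the intersection. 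The paper's approach yields explicit values of $r_{\alpha+1}$ at each level as a by-product, while yours is shorter and isolates the combinatorial reason cleanly; your closing remark that the same computation gives the recursion $r_{\alpha+1}(\overline{n})=\bigl(\sup_k r_{\alpha+1}(\overline{n}^\smallfrown k)\bigr)+1$ for inner nodes in fact recovers everything the inductive proof provides, so nothing is lost.
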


\begin{proof}
Induction on $\alpha$. Clearly true for $\alpha=0$ (we have $r_1(\overline{n})=0$ for each $\overline{n} \in T_1$).
Assume the claim to be true for $\alpha$. Then
$r_{\alpha+1+1}(n)=o(T_{\alpha+1})=\alpha+1$ for all $n\in \Natural$ by the construction of the tree $T_{\alpha+2}$ and by Lemma~\ref{l:height}. Further, for each $n\in \Natural$ and each $\overline{n}\in T_{\alpha+1}$, the ordinal $r_{\alpha+1+1}(n^\smallfrown\overline{n})=r_{\alpha+1}(\overline{n})$ is isolated by the inductive hypothesis.
In the case when $\alpha$ is limit and the claim has been proved for all $\beta<\alpha$,  
we have $r_{\alpha+1}(n)=o(T_{\alpha_n+1})=\alpha_n+1$ for all $n\in \Natural$ by the construction of the tree $T_{\alpha+1}$ and by Lemma~\ref{l:height}.
Further, for each  $n\in \Natural$ and each $\overline{n} \in T_{\alpha_n+1}$, the ordinal
$r_{\alpha+1}(n^\smallfrown\overline{n})=r_{\alpha_n+1}(\overline{n})$ is isolated by the inductive hypothesis.
\end{proof}


\begin{defn}\label{d:metric}
Let $(A_i)_{i=0}^\infty$ be a sequence of countably infinite pairwise disjoint sets which will be fixed from now on. 
Suppose that $A_i=\set{a^i_1,a^i_2,\ldots}$ and denote $A_i^k=\set{a^i_1,\ldots,a^i_k}$ for all $i \geq 0$ and $k \geq 1$. We assume that $A_0^2=\set{1,2}$.
To shorten the notation we put 
\[
M^0_{\overline{n}}=M(A_0^2,A_1^{n_1},\ldots,A_h^{n_h})
\]
for each $\overline{n}=(n_1,\ldots,n_h) \in \Natural^h$.

Let us fix $\mu<\omega_1$ from now on. The tree $T=T_{\mu+1}$ encodes a construction of a metric space.
First we consider all the maximal elements $\max T$ of $T$.
We thus have a collection ${\mathcal M}_0=\set{M^0_{\overline{n}}:\overline{n} \in \max T}$. 
Each space in this collection is finite.
Once ${\mathcal M}_\alpha=\set{M^\alpha_{\overline{n}}:\overline{n} \in \max T^{(\alpha)}}$ has been defined, 
we pass to the collection ${\mathcal M}_{\alpha+1}=\set{M^{\alpha+1}_{\overline{n}}:\overline{n} \in \max T^{(\alpha+1)}}$ as follows.
For every $\overline{n}=(n_1,\ldots,n_h) \in \max T^{(\alpha+1)}$ we put 
\[
M^{\alpha+1}_{\overline{n}}=
\begin{cases}(M^{\alpha}_{\overline{n}^\smallfrown k})_{k=1}^\infty/(\set{\basepoint}\cup A_0^2 \cup A_1^{n_1}\cup\ldots\cup A_{h}^{n_h})&\mbox{ if }\overline{n} \in \max T^{(\alpha+1)}\setminus \max T^{(\alpha)}\\
M_{\overline{n}}^\alpha&\mbox{ if }\overline{n} \in \max T^{(\alpha+1)} \cap \max T^{(\alpha)}.
\end{cases}
\]
When $\alpha<\mu$ is a limit ordinal, we define the elements of ${\mathcal M}_\alpha=\set{M^\alpha_{\overline{n}}:\overline{n} \in \max T^{(\alpha)}}$ as $\displaystyle M^\alpha_{\overline{n}}=\lim_{\beta<\alpha} M^\beta_{\overline{n}}$. 
This definition makes sense since for every $\overline{n} \in \max T^{(\alpha)}$ there is $\beta_0<\alpha$ such that $\overline{n} \in \max T^{(\beta)}$ for all $\beta_0\le \beta  < \alpha$. Indeed, by Lemma~\ref{l:MaximalAtNonlimit}, we may take $\beta_0=r_{\mu+1}(\overline{n})$. It is isolated, so $\beta_0<\alpha$.

At the end of the day we have ${\mathcal M}_\mu=\set{M^\mu_n:n \in \Natural}$ which we glue into a single space $M_\emptyset^{\mu+1}=(M^\mu_n)_{n=1}^\infty/(\set{\basepoint}\cup A_0^2)$.
\end{defn}

\begin{lem}\label{l:embedding}
For every $1\leq \alpha \leq \mu+1$ for every $\overline{n}=(n_1,\ldots,n_h) \in \max T^{(\alpha)}$ (or rather $\overline{n}=\emptyset$ in the case $\alpha=\mu+1$) there are $N\leq 2^{2+n_1+\ldots + n_h}$ and an isometric embedding  $f:M^\alpha_{\overline{n}} \to C([0,\omega^\alpha\cdot N])$ such that $f(\basepoint)=0$, $f(1)\equiv 1$ and $f(2)\equiv-1$. When $\alpha=\mu+1$, we have $N=1$.
\end{lem}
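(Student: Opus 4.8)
The plan is to prove Lemma~\ref{l:embedding} by transfinite induction on $\alpha$, running in parallel with the inductive construction of the spaces $M^\alpha_{\overline n}$ in Definition~\ref{d:metric}, and feeding the bookkeeping data ($f(\basepoint)=0$, $f(1)\equiv1$, $f(2)\equiv-1$, and the sign condition $f(x)(\beta)\in\set{\pm1}$ for $x\in A_*$) into Lemma~\ref{l:embed-sup-amalgam} at each successor step.

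\medskip

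\textbf{Base case $\alpha=1$.} Here $\overline n=n\in\max T^{(1)}$, so $M^1_n=M^0_n=M(A_0^2,A_1^{n_1},\ldots,A_h^{n_h})$ is one of the finite $3$-level graphs (with $h$ the length of $\overline n$, and $n_1=\ldots$ as given). Lemma~\ref{l:embed-finite}, applied with $C_1=A_0^2=\set{1,2}$ and $C_{i+1}=A_i^{n_i}$, produces an isometric embedding into $\ell_\infty(F)$ with $F=F(A_0^2,A_1^{n_1},\ldots,A_h^{n_h})$, sending $\basepoint$ to $0$, taking values $\pm1$ on second-level points, and with $f(1)\equiv1$, $f(2)\equiv-1$. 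Since $F$ is finite, $\ell_\infty(F)=C([0,|F|])$ isometrically, and $|F|=\prod_i|C_i|$; one checks $|F|\le 2^{2+n_1+\ldots+n_h}$ (in fact $|F|=2\cdot n_1\cdots n_h$), so we may embed into $C([0,\omega\cdot N])$ with $N\le 2^{2+n_1+\ldots+n_h}$ after enlarging to an ordinal of the form $\omega\cdot N$ (a finite set sits isometrically inside any larger $C([0,\gamma])$ by extending the function to be constant past the finitely many points used — all four hypotheses are preserved because the newly added points can be given the value $\pm1$ dictated by $f(1)$, exactly as in the sign-flip device of Lemma~\ref{l:embed-finite}).

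\medskip

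\textbf{Successor step $\alpha\to\alpha$ (i.e.\ treating $\overline n\in\max T^{(\alpha)}$ with $\alpha$ a successor or, more precisely, handling the two cases of the piecewise definition).} If $\overline n\in\max T^{(\alpha)}\cap\max T^{(\alpha-1)}$ then $M^\alpha_{\overline n}=M^{\alpha-1}_{\overline n}$ and there is nothing to do (the required embedding is the one from the previous stage, since $\omega^{\alpha-1}\cdot N\le\omega^\alpha\cdot N$ and a $C([0,\gamma])$-valued isometry composed with the inclusion $C([0,\gamma])\hookrightarrow C([0,\delta])$ for $\gamma<\delta$ stays an isometry while preserving all four properties — again extend constantly). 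If instead $\overline n\in\max T^{(\alpha)}\setminus\max T^{(\alpha-1)}$, then by the noted structure of the derivation the successors of $\overline n$ in $T^{(\alpha-1)}$ are exactly $\set{\overline n{}^\smallfrown k:k\in\Natural}$, and $M^\alpha_{\overline n}=(M^{\alpha-1}_{\overline n{}^\smallfrown k})_{k=1}^\infty/A$ with $A=\set{\basepoint}\cup A_0^2\cup A_1^{n_1}\cup\ldots\cup A_h^{n_h}$, so $A_*=A\setminus\set{\basepoint}$ has cardinality $2+n_1+\ldots+n_h$. By the inductive hypothesis each $M^{\alpha-1}_{\overline n{}^\smallfrown k}$ embeds isometrically into $C([0,\omega^{\alpha-1}\cdot N_k])$ for some $N_k\le 2^{2+n_1+\ldots+n_h+\ell_k}$ (where $\ell_k$ records the tail of $\overline n{}^\smallfrown k$) with all four properties; writing $\gamma_k=\omega^{\alpha-1}\cdot N_k$ and noting $\sum_k\gamma_k=\omega^{\alpha-1}\cdot\omega=\omega^\alpha$ when $\alpha$ is a successor (or $\sum_k\omega^{\alpha_k+1}=\omega^\alpha$ when the relevant tree branch was built at a limit stage), Lemma~\ref{l:embed-sup-amalgam}(a),(b) with the set $A$ here in the role of its $A$ gives an isometric embedding $f:M^\alpha_{\overline n}\to C([0,(\sum_k\gamma_k)\cdot N])=C([0,\omega^\alpha\cdot N])$ with $N\le 2^{|A_*|}=2^{2+n_1+\ldots+n_h}$, with $f(\basepoint)=0$, the $\pm1$ sign property on $A_*$, and (by part b) $f(1)\equiv1$, $f(2)\equiv-1$.

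\medskip

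\textbf{Limit step.} When $\alpha<\mu+1$ is a limit ordinal and $\overline n\in\max T^{(\alpha)}$, Lemma~\ref{l:MaximalAtNonlimit} gives $\beta_0:=r_{\mu+1}(\overline n)$ isolated with $\beta_0<\alpha$ and $\overline n\in\max T^{(\beta)}$ for all $\beta_0\le\beta<\alpha$, so $M^\alpha_{\overline n}=\lim_{\beta<\alpha}M^\beta_{\overline n}=M^{\beta_0}_{\overline n}$ is literally the same metric space as at stage $\beta_0$; the embedding from the inductive hypothesis at $\beta_0$ into $C([0,\omega^{\beta_0}\cdot N])$ is then post-composed with the inclusion into $C([0,\omega^\alpha\cdot N])$ (valid since $\omega^{\beta_0}\cdot N<\omega^\alpha$), preserving all four properties. \textbf{Final stage $\alpha=\mu+1$.} Here $\overline n=\emptyset$ and $M^{\mu+1}_\emptyset=(M^\mu_n)_{n=1}^\infty/(\set{\basepoint}\cup A_0^2)$, whose amalgamating set is exactly $A=\set{\basepoint,1,2}$; apply Lemma~\ref{l:embed-sup-amalgam}(a),(b),(c) — part (c) forces $N=1$ — to the isometric embeddings of the $M^\mu_n$ into $C([0,\omega^\mu\cdot N_n])$ furnished by the case $\alpha=\mu$ already done, with $\sum_n\omega^\mu\cdot N_n=\omega^{\mu+1}$, yielding $f:M^{\mu+1}_\emptyset\to C([0,\omega^{\mu+1}])$ isometric with $f(\basepoint)=0$, $f(1)\equiv1$, $f(2)\equiv-1$.

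\medskip

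The main obstacle is purely organizational: one must verify at every step that the outputs of Lemma~\ref{l:embed-sup-amalgam} (which are stated with a generic finite amalgamating set $A$ and a generic sequence $(\alpha_n)$) line up exactly with the data recorded in the current lemma — in particular that the hypotheses ``$f_n(x)(\beta)\in\set{\pm1}$ for $x\in A_*$'' and ``$f_n(1)\equiv1$, $f_n(2)\equiv-1$'' are genuinely carried by the inductive hypothesis for the \emph{larger} amalgamating set used one level down, and that the ordinal arithmetic $\sum_n(\omega^{\beta}\cdot N_n)=\omega^{\beta+1}$ (resp.\ $\sum_n\omega^{\alpha_n+1}=\omega^\alpha$ at tree-limit branches) is exactly what makes the target $C([0,\omega^\alpha\cdot N])$ come out right. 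Once the indexing conventions of Definition~\ref{d:metric} and of the trees $T_{\alpha+1}$ are respected, each of the three cases is a direct quotation of Lemma~\ref{l:embed-finite} or Lemma~\ref{l:embed-sup-amalgam}.
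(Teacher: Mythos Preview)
Your approach is the same as the paper's: transfinite induction, with Lemma~\ref{l:embed-finite} feeding the base and Lemma~\ref{l:embed-sup-amalgam}(a)--(c) driving the successor and final steps. The paper's own proof is in fact just a three-line sketch of exactly this scheme, so your write-up is a faithful (and more detailed) expansion of it.

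There is one organizational slip worth flagging. In your base case you assert that for $\overline{n}\in\max T^{(1)}$ one has $M^1_{\overline{n}}=M^0_{\overline{n}}$, but this is only true when $\overline{n}$ was already maximal in $T^{(0)}=T$; if $\overline{n}\in\max T^{(1)}\setminus\max T$ then $M^1_{\overline{n}}$ is a genuine (infinite) sup-amalgam of the finite spaces $M^0_{\overline{n}^\smallfrown k}$. The correct base case---and what the paper intends---is to use Lemma~\ref{l:embed-finite} to embed each $M^0_{\overline{n}^\smallfrown k}$ into $\ell_\infty(F_k)=C([0,|F_k|])$ with finite $|F_k|$, and then apply Lemma~\ref{l:embed-sup-amalgam}(a)--(b) directly to these (with $\sum_k|F_k|=\omega$) to obtain the embedding of $M^1_{\overline{n}}$ into $C([0,\omega\cdot N])$. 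Your ``successor step'' already contains precisely this argument when specialized to $\alpha=1$, so nothing is actually missing; you just need to relabel what plays the role of the induction anchor.
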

\begin{proof}
By the definition of the spaces in ${\mathcal M}_0$, we get the claim for $\alpha=1$ using Lemma~\ref{l:embed-finite} and Lemma~\ref{l:embed-sup-amalgam} a)-b) (here the ordinals $\alpha_n$ are finite so the spaces $C([0,\alpha_n])$ and $\ell_\infty(\alpha_n)$ are isometric). 
For $\alpha>1$, the proof is a standard transfinite induction argument exploiting Lemma~\ref{l:embed-sup-amalgam} a)-b).
Finally, when $\alpha=\mu+1$, the definition of $M_\emptyset^{\mu+1}$ shows that also the part c) of Lemma~\ref{l:embed-sup-amalgam} applies.
\end{proof}

\begin{lem}\label{l:precise}
(i) For each $1\leq D<2$ there exists a constant $C_D=\left(\log\left(\lfloor\frac{D}{2-D}\rfloor+1\right)\right)^{-1}>0$ such that  for every $\alpha\leq \mu$ and every $\overline{n}=(n_1,\ldots,n_h) \in \max T^{(\alpha)}$ and every $f:M^\alpha_{\overline{n}}\ds\mathop{\hookrightarrow}_D C(K)$ we have
\[
 \cardinality{X_{1,2}^f\cap \bigcap_{i=1}^{h-1} X^f_{a_i,b_i} \cap K^{(r_{\mu+1}(\overline{n}))}} \geq C_D \log(n_h)
\]
for all $a_i\neq b_i \in A_i^{n_i}$, $1\leq i\leq h-1$ (with the obvious meaning when $h=1$).

(ii) For each $1\leq D<2$ and every $\ds f:M^{\mu+1}_\emptyset \embeds{D} C(K)$ we have
\[
X_{1,2}^f\cap K^{(\mu+1)} \neq \emptyset.
\]
\end{lem}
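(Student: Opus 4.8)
The plan is to prove Lemma~\ref{l:precise} by transfinite induction on $\alpha$, with part (i) serving as the inductive engine and part (ii) following as the terminal step once the tree has been fully derived. The base case $\alpha=0$ concerns the finite spaces $M^0_{\overline n}=M(A_0^2,A_1^{n_1},\ldots,A_h^{n_h})$: here I would combine Lemma~\ref{l:embed-finite} (the explicit isometric embedding into $\ell_\infty(F)$ with $f(1)\equiv 1$, $f(2)\equiv -1$) with a counting argument on $\ell_\infty^n$. Concretely, if $f:M^0_{\overline n}\embeds{D}C(K)$, then for $a_i\neq b_i\in A_i^{n_i}$ the sets $X^f_{a_i,b_i}$ and $X^f_{1,2}$ all live in $K^{(0)}=K$, and one must show their common intersection has size at least $C_D\log(n_h)$. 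The mechanism is the familiar ``$2-\varepsilon$'' obstruction: for each coordinate $c\in C_h=A_h^{n_h}$ one looks at which sign $f(c)-f(\basepoint)$ (roughly) takes on the relevant points of $X^f_{1,2}\cap\bigcap_{i<h}X^f_{a_i,b_i}$; if this intersection were too small, a pigeonhole on the $\le 2^{|{\rm intersection}|}$ possible sign patterns over $A_h^{n_h}$ would force two elements $a_h,b_h$ of $C_h$ with the same pattern, and then the triangle inequalities among $\basepoint$, $a_h$, $b_h$, and a point $F\in F(\ldots)$ containing one of them contradict $\dist(f)<2$. The precise bookkeeping that yields $\lfloor D/(2-D)\rfloor+1$ sign classes per coordinate and hence the logarithm base in $C_D$ is the one delicate computation; this is essentially the quantitative heart of the paper and is where I expect the real work to lie.

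For the successor step $\alpha=\beta+1$ with $\overline n\in\max T^{(\alpha)}\setminus\max T^{(\beta)}$, recall $M^\alpha_{\overline n}=(M^\beta_{\overline n^\smallfrown k})_{k=1}^\infty/A$ where $A=\set{\basepoint}\cup A_0^2\cup A_1^{n_1}\cup\cdots\cup A_h^{n_h}$. Here I would invoke Lemma~\ref{l:nonemptyintersection}: the inductive hypothesis (i) applied to each $M^\beta_{\overline n^\smallfrown k}$ gives exactly the hypothesis of that lemma with $\alpha_k=r_{\mu+1}(\overline n^\smallfrown k)$, $h$ replaced by the number of second-level clusters (which is $h+1$ here, with the new cluster being $C_{h+1}=A_{h+1}^{k}$... — more precisely one uses the clusters $A_0^2,A_1^{n_1},\ldots,A_h^{n_h}$ with the pair $(1,2)$ playing the role of $C_1$), and $\beta_k=\lceil C_D\log k\rceil$. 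Since $r_{\mu+1}(\overline n^\smallfrown k)\nearrow r_{\mu+1}(\overline n)=:\gamma$ (using Lemma~\ref{l:MaximalAtNonlimit}, the relevant ordinal is isolated, and by the tree construction the predecessors' indices tend to it), one is in case (a) or (b) of Lemma~\ref{l:nonemptyintersection} depending on whether $r_{\mu+1}(\overline n^\smallfrown k)$ is eventually constant or strictly increasing. In the strictly increasing (limit) case we get $\bigcap X^f_{a_i,b_i}\cap K^{(\gamma)}\neq\emptyset$, i.e. size $\ge 1\ge C_D\log(1)$, which is too weak; the resolution is that $M^\alpha_{\overline n}$ contains $M^\beta_{\overline n^\smallfrown k}$ isometrically for \emph{every} $k$, and the target set $\bigcap_{i=1}^h X^f_{a_i,b_i}\cap K^{(r_{\mu+1}(\overline n))}$ with $K^{(r_{\mu+1}(\overline n))}=K^{(\gamma)}$ must simultaneously absorb the $\beta_k$-point configurations sitting inside each $K^{(\gamma_k)}\supset K^{(\gamma)}$... — one has to argue the points can be taken in $K^{(\gamma)}$ itself by a Cantor--Bendixson/cluster-point argument, and that $n_h$ on the right is the last coordinate of $\overline n$, matching $C_D\log(n_h)$ because the fresh amalgamation introduces the cluster indexed by $n_h$ at this very stage. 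I would align the statement of (i) carefully so that $n_h$ always refers to the cluster created at the amalgamation step producing $\overline n$, so that no loss occurs.

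The limit step $\alpha$ for $M^\alpha_{\overline n}=\lim_{\beta<\alpha}M^\beta_{\overline n}$ is handled by the observation already recorded in Definition~\ref{d:metric}: for $\overline n\in\max T^{(\alpha)}$ there is $\beta_0=r_{\mu+1}(\overline n)<\alpha$ with $M^\beta_{\overline n}=M^{\beta_0}_{\overline n}$ for all $\beta_0\le\beta<\alpha$, so $M^\alpha_{\overline n}=M^{\beta_0}_{\overline n}$ and the conclusion for level $\alpha$ is literally the conclusion already known at level $\beta_0$, since $r_{\mu+1}(\overline n)$ does not change. Thus the limit step carries no new content. Finally, for part (ii): $M^{\mu+1}_\emptyset=(M^\mu_n)_{n=1}^\infty/(\set{\basepoint}\cup A_0^2)$, so $A=\set{\basepoint,1,2}$, $h=1$, and applying Lemma~\ref{l:nonemptyintersection}(a) with the inductive fact (i) at level $\mu$ — which gives $|X^{f_n}_{1,2}\cap K^{(r_{\mu+1}(n))}|\ge C_D\log(n)\to\infty$ and $r_{\mu+1}(n)=o(T^{(\ldots)}_{\ldots})\nearrow\mu+1$ by Lemmas~\ref{l:height} and~\ref{l:MaximalAtNonlimit} — yields $X^f_{1,2}\cap K^{(\mu+1)}\neq\emptyset$, which is exactly (ii). The main obstacle, to repeat, is the finite-dimensional counting lemma underlying the base case and its clean propagation: getting the constant $C_D$ with the right logarithm and making sure the ``$h-1$ frozen pairs plus one active pair'' structure is preserved verbatim through the sup-amalgamation so that Lemma~\ref{l:nonemptyintersection} applies without fudging indices.
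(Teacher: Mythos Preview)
Your overall architecture---transfinite induction on $\alpha$, with Lemma~\ref{l:nonemptyintersection} driving the successor step and the limit step being vacuous---matches the paper. But there is a genuine gap in how you pass from ``nonempty'' to ``$\geq C_D\log(n_h)$'' at the successor stage, and your proposed resolution is not the right one.

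You correctly observe that Lemma~\ref{l:nonemptyintersection} alone only yields $X^f_{1,2}\cap\bigcap_{i=1}^h X^f_{a_i,b_i}\cap K^{(r_{\mu+1}(\overline n))}\neq\emptyset$, which is weaker than the claimed cardinality bound. Your fix---pushing the $\beta_k$-many points from $K^{(\gamma_k)}$ into $K^{(\gamma)}$ via a ``Cantor--Bendixson/cluster-point argument''---does not work: a nested sequence of compact sets each of large finite cardinality can perfectly well intersect in a single point. The paper instead repeats the packing argument at \emph{every} successor step, not just the base case. Concretely: once Lemma~\ref{l:nonemptyintersection} gives nonemptiness of the full intersection for \emph{every} choice of $a_h\neq b_h\in A_h^{n_h}$, one fixes the pairs $(a_i,b_i)$ for $i\leq h-1$, and for each pair $a\neq b\in A_h^{n_h}$ picks a witness $x^*_{a,b}$ in the corresponding intersection. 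Setting $\Gamma=\{x^*_{a,b}\}$, the map $a\mapsto (\langle f(a),\gamma\rangle)_{\gamma\in\Gamma}$ sends $A_h^{n_h}$ to a $(4-2D)$-separated subset of $B_{\ell_\infty(\Gamma)}(0,D)$, whence $|\Gamma|\geq C_D\log(n_h)$. Since $\Gamma\subset X^f_{1,2}\cap\bigcap_{i<h}X^f_{a_i,b_i}\cap K^{(r_{\mu+1}(\overline n))}$, this is exactly the inductive claim. The same packing argument handles the base case $\alpha=0$, once one observes (via the third-level points $A=\{1,a_1,\ldots,a_h\}$ and $B=\{2,b_1,\ldots,b_h\}$ with $d(A,B)=4$ and the triangle inequality) that any $x^*$ realizing $\norm{f(A)-f(B)}$ lies in the full intersection. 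Your invocation of Lemma~\ref{l:embed-finite} here is a red herring: that lemma concerns isometric embeddings \emph{of} $M^0_{\overline n}$, not embeddings \emph{into} $C(K)$.

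Two smaller points. First, $r_{\mu+1}(\overline n^\smallfrown k)$ does not tend to $r_{\mu+1}(\overline n)$; in both the constant and strictly increasing cases its supremum is $r_{\mu+1}(\overline n)-1$, so the ``$+1$'' in the derivative index has to be earned (in case~(b) from the fact that $\sup_k\beta_k=\infty$ forces an accumulation point; in case~(a) the paper's phrasing is terse, but the same mechanism is at work). Second, the inductive hypothesis applied to $\overline n^\smallfrown k$ gives the bound $C_D\log(k)$ with the intersection running over $i\leq h$ (since $\overline n^\smallfrown k$ has length $h+1$); it is the subsequent packing step, not Lemma~\ref{l:nonemptyintersection}, that produces $C_D\log(n_h)$ for $\overline n$.
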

\begin{proof}
We will proceed by a transfinite induction on $\alpha$. 
We assume the result to be true for every $\beta<\alpha$ and we want to prove it for $\alpha$.
Clearly it is enough to prove the claim for $\alpha=r_{\mu+1}(\overline{n})$. 
Indeed, if $\alpha>r_{\mu+1}(\overline{n})$ then $M_{\overline{n}}^{\alpha}$ and $M_{\overline{n}}^{r_{\mu+1}(\overline{n})}$ are the same, and the result follows by the inductive hypothesis.

In the case $0<\alpha=r_{\mu+1}(\overline{n})$, the node $\overline{n}$ just became maximal, i.e. $\overline{n} \in \max T^{(\alpha)}\setminus \max T^{(\beta)}$ where $\alpha=\beta+1$ (remember Lemma~\ref{l:MaximalAtNonlimit}). 
This means that the immediate successors $\set{\overline{n}^\smallfrown k :k \in \Natural}$ of $\overline{n}$ in $T^{(\beta)}$ are all maximal in $T^{(\beta)}$ and
\[
M_{\overline{n}}^\alpha=(M_{\overline{n}^\smallfrown k}^\beta)_{k=1}^\infty/(\set{\basepoint} \cup A_0^2\cup A_1^{n_1}\cup\ldots\cup A_h^{n_h}).
\]
By the inductive hypothesis we have for every $k \in \Natural$ and every $\ds f_k:M_{\overline{n}^\smallfrown k}^\beta \embeds{D} C(K)$ that
\begin{equation}\label{e:logarithmic-lower-bound}
 \cardinality{X^{f_k}_{1,2}\cap \bigcap_{i=1}^{h} X^{f_k}_{a_i,b_i} \cap K^{(r_{\mu+1}(\overline{n}^\smallfrown k))}} \geq C_D \log(k)
\end{equation}
for every choice $a_i\neq b_i \in A_i^{n_i}$, $1\leq i \leq h$.

By the construction of $T_{\mu+1}$ it is clear that we have two types of non-maximal nodes $\overline{n} \in T_{\mu+1}$:\\ 
{\textbf a)}~those for which  $(r_{\mu+1}(\overline{n}^\smallfrown k))_{k \in \Natural}$ is a strictly increasing sequence of ordinals and\\ 
{\textbf b)} those for which it is a constant sequence.

The case a) means that $\beta$ is a limit ordinal
and $\sup\set{r_{\mu+1}(\overline{n}^\smallfrown k):k \in \Natural}+1=r_{\mu+1}(\overline{n})$  by the definition of $r_{\mu+1}(\overline{n})$.
The case b) means that $\beta$ is a successor and for all $k\in \Natural$ we have $r_{\mu+1}(\overline{n}^\smallfrown k)+1=r_{\mu+1}(\overline{n})$.


Let $f$ be an embedding such that $d(x,y) \leq \norm{f(x)-f(y)}\leq Dd(x,y)$. 
Remembering~\eqref{e:logarithmic-lower-bound}, we apply Lemma~\ref{l:nonemptyintersection} to get
\[
 X^f_{1,2}\cap \bigcap_{i=1}^{h} X^f_{a_i,b_i} \cap K^{(r_{\mu+1}(\overline{n}))} \neq \emptyset
\]
for every choice $a_i\neq b_i \in A_i^{n_i}$, $1\leq i \leq h$.
Notice that since $\sup\set{r_{\mu+1}(n):n\in \Natural}=\mu$, we can put formally $r_{\mu+1}(\emptyset)=\mu+1$, and so the above also proves (ii).

Now we need to pass from ``non-empty'' to ``larger than $C_D \log(n_h)$''. 
We may assume that $f(\basepoint)=0$.
Let $a_i \neq b_i \in A_i^{n_i}$ be fixed for $1 \leq i \leq h-1$. By the above there exists 
\[
x^*_{a,b} \in X_{1,2}^f\cap \bigcap_{i=1}^{h-1} X^f_{a_i,b_i} \cap X^f_{a,b}\cap K^{(r_{\mu+1}(\overline{n}))}
\]  
for each  $a\neq b \in A_h^{n_h}$. 
We set $\Gamma=\set{x^*_{a,b}:a,b \in A_h^{n_h}, a\neq b}$. Now 
\[
\set{(\duality{f(a),\gamma})_{\gamma \in \Gamma}:a \in A_h^{n_h}}\subset B_{\ell_\infty(\Gamma)}(0,D)
\] 
is $(4-2D)$-separated set of cardinality $n_h$. We thus get that $\cardinality{\Gamma}\geq C_D\log(n_h)$.

It remains to prove what happens if $\alpha=0$. So let $\ds f:M^0_{\overline{n}} \embeds{D} C(K)$ and let $a_i\neq b_i \in A_i^{n_i}$, $1\leq i \leq h$, be given. We put $A=\set{1}\cup\set{a_i:1\leq i\leq h}$ and $B=\set{2}\cup \set{b_i:1\leq i\leq h}$. We get by the triangle inequality that each $x^* \in K$ such that $\abs{\duality{x^*,f(A)-f(B)}}=\norm{f(A)-f(B)}$ satisfies
\[
 x^* \in X^f_{1,2}\cap \bigcap_{i=1}^{h} X^f_{a_i,b_i}.
\]
Now by the same argument as above 
we get the desired inequality.
\end{proof}
\begin{proof}[Proof of Theorem~\ref{t:classification}]
Let $\alpha<\omega_1$ be given. If it is isolated, say $\alpha=\mu+1$, we put $M_\alpha:=M^{\mu+1}_\emptyset$. This space embeds isometrically into $C([0,\omega^\alpha])$ by Lemma~\ref{l:embedding}. 
By Lemma~\ref{l:precise} (ii) we see that if $\ds M_\emptyset^{\mu+1} \embeds{D} C(K)$, $D<2$, then $K^{(\alpha)}\neq \emptyset$.

Finally, if $\alpha$ is a limit ordinal we choose $\mu_n \nearrow \alpha$ and we put 
\[
M_{\alpha}=(M^{\mu_n+1}_\emptyset)_{n=1}^\infty/(\set{\basepoint}\cup A_0^2).
\]
Since $\sum \mu_n=\alpha$ and since $M^{\mu_n+1}_\emptyset$ embeds isometrically into $C([0,\omega^{\mu_n+1}])$ for every $n \in \Natural$, Lemma~\ref{l:embed-sup-amalgam} shows that $M_{\alpha}$ embeds isometrically into $C([0,\omega^\alpha])$.
If $\ds f:M_\alpha \embeds{D} C(K)$, $D<2$, then $X^f_{1,2} \cap K^{(\mu_n+1)}\neq \emptyset$ for each $n \in \Natural$. This is a decreasing set of compact sets. Hence $X^f_{1,2} \cap K^{(\alpha)} \neq \emptyset$.
\end{proof}

\section{Concluding remarks}\label{s:Concluding}

\begin{rem}
Let $\alpha<\omega_1$ and let $K$ be a Hausdorff compact space such that $K^{(\alpha)}\neq \emptyset$. We denote by $C_0(K)$ the closed subspace of $C(K)$ of the functions whose restrictions on $K^{(\alpha)}$ are identically zero.
An inspection of above proof shows that $C([0,\omega^\alpha])$ does not embed with distortion strictly less than $2$ into $C_0(K)$. 
In particular, $C([0,\omega^\alpha])$ does not embed with distortion strictly less than $2$ into $C_0([0,\omega^\alpha])$.
For $\alpha=1$ the last statement means that $c$ does not embed with distortion strictly less than $2$ into $c_0$. This also follows from \cite[Proposition 3.1]{KL} as an easy but entertaining exercise.
\end{rem}

\begin{prop}\label{p:byproduct}
Let $1\leq D<2$ be given. Then for every $1\leq m <\omega$ there is $1\leq n<\omega$ such that for all $\alpha<\omega_1$ the space $C([0,\omega^\alpha\cdot n])$ does not embed into the space $C([0,\omega^\alpha\cdot m])$ with distortion $D$.
\end{prop}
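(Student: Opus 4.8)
The plan is to mimic the construction of Section~\ref{s:Main} but to enlarge the single distinguished pair $A_0^2=\set{1,2}$ to a large finite set, and then to extract from the resulting uniformly discrete space not just one point of $K^{(\alpha)}$ but many. Fix $1\le D<2$ and $m$, put $q:=\lfloor\frac{D}{2-D}\rfloor+1$, $n_1:=q^{m+1}+1$ and $n:=2^{n_1}$. The goal is to produce, for every $\alpha<\omega_1$, a countable uniformly discrete $M_\alpha\subset C([0,\omega^\alpha\cdot n])$ such that every $\ds f:M_\alpha\embeds{D}C(K)$ forces $\cardinality{K^{(\alpha)}}>m+1$. Since $[0,\omega^\alpha\cdot m]^{(\alpha)}=\set{\omega^\alpha,2\omega^\alpha,\dots,m\omega^\alpha}$ has at most $m+1$ elements (exactly $m$ when $\alpha\ge1$), this prevents $M_\alpha$, hence $C([0,\omega^\alpha\cdot n])$, from embedding into $C([0,\omega^\alpha\cdot m])$ with distortion $D$: compose a hypothetical embedding with the isometric inclusion $M_\alpha\hookrightarrow C([0,\omega^\alpha\cdot n])$.

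First I would rerun Definition~\ref{d:metric} and everything up to and including the proof of Theorem~\ref{t:classification} \emph{verbatim}, with $A_0^2$ replaced everywhere by $A_0^{n_1}=\set{a^0_1,\dots,a^0_{n_1}}$ and with the fixed index pair ``$1,2$'' replaced by ``an arbitrary pair $a\ne b$ in $A_0^{n_1}$''. The only feature of $\set{1,2}$ that was genuinely used is the normalisation ``$f(1)\equiv1$, $f(2)\equiv-1$'' in Lemma~\ref{l:embed-finite}, part~c) of Lemma~\ref{l:embed-sup-amalgam}, and Lemma~\ref{l:embedding}; this is a convenience, not a necessity. What the construction actually needs, namely that $f(x)(\beta)\in\set{\pm1}$ for every second-level point $x$ and every face $\beta$, holds for \emph{any} finite $C_1$, because $d(x,\beta)\in\set{1,3}$ whenever $x$ is a second-level point of $M(C_1,\dots,C_h)$ and $\beta$ is a face. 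With this substitution Lemmas~\ref{l:embed-finite}--\ref{l:embedding} go through with only bookkeeping changes (using part~a) of Lemma~\ref{l:embed-sup-amalgam} in place of a)--c), so the top amalgam contributes a factor $\le 2^{n_1}$ instead of $1$), and the transfinite induction of Lemma~\ref{l:precise} goes through symmetrically in the coordinate $A_0^{n_1}$; its conclusion becomes: for every $\alpha$ there is a countable uniformly discrete $M_\alpha$ with $\set{\basepoint}\cup A_0^{n_1}\subset M_\alpha$, an isometric embedding $M_\alpha\hookrightarrow C([0,\omega^\alpha\cdot N])$ with $N\le2^{n_1}$ and $f(\basepoint)=0$, such that every $\ds f:M_\alpha\embeds{D}C(K)$ satisfies
\[
X^f_{a,b}\cap K^{(\alpha)}\ne\emptyset\qquad\text{for every pair }a\ne b\in A_0^{n_1}.
\]
As $[0,\omega^\alpha\cdot N]$ is clopen in $[0,\omega^\alpha\cdot n]$ when $N\le n=2^{n_1}$, this gives $M_\alpha\subset C([0,\omega^\alpha\cdot n])$ for all $\alpha$ at once.

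Next comes the packing step, which is precisely the last paragraph of the proof of Lemma~\ref{l:precise} applied to the coordinate $A_0^{n_1}$. Let $\ds f:M_\alpha\embeds{D}C(K)$, normalised so $f(\basepoint)=0$; then $\norm{f(c)}\le D$ for $c\in A_0^{n_1}$ because $d(c,\basepoint)=1$. For each pair $a\ne b$ pick $x^*_{a,b}\in X^f_{a,b}\cap K^{(\alpha)}$ and put $\Gamma:=\set{x^*_{a,b}:a\ne b\in A_0^{n_1}}\subset K^{(\alpha)}$. The vectors $(\duality{x^*,f(c)})_{x^*\in\Gamma}$, $c\in A_0^{n_1}$, lie in $B_{\ell_\infty(\Gamma)}(0,D)$ and are pairwise $(4-2D)$-separated, since at $x^*=x^*_{a,b}$ the vectors attached to $a$ and to $b$ differ by at least $4-2D$. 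Hence $n_1=\cardinality{A_0^{n_1}}\le q^{\cardinality{\Gamma}}$, so
\[
\cardinality{K^{(\alpha)}}\ \ge\ \cardinality{\Gamma}\ \ge\ \frac{\log n_1}{\log q}\ =\ \log_q\bigl(q^{m+1}+1\bigr)\ >\ m+1,
\]
the contradiction sought above. (For $\alpha=0$ the same computation works since we chose $n_1$ so that the bound exceeds $m+1$.)

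The one point that requires care — and the reason the packing step must be performed \emph{last} — is the limit-ordinal case inside Theorem~\ref{t:classification}. For isolated $\alpha=\mu+1$, $M_\alpha$ is the analogue of $M^{\mu+1}_\emptyset$ and ``$X^f_{a,b}\cap K^{(\alpha)}\ne\emptyset$ for all pairs'' is the analogue of Lemma~\ref{l:precise}(ii). For limit $\alpha$ one picks $\mu_j\nearrow\alpha$ and sets $M_\alpha=(M^{\mu_j+1}_\emptyset)_{j}/(\set{\basepoint}\cup A_0^{n_1})$; the pieces embed isometrically into $C([0,\omega^{\mu_j+1}\cdot N_j])$ with $N_j$ uniformly bounded by $2^{n_1}$, so $\sum_j\omega^{\mu_j+1}\cdot N_j=\omega^\alpha$ and $M_\alpha\hookrightarrow C([0,\omega^\alpha\cdot N])$ with $N\le2^{n_1}$ by Lemma~\ref{l:embed-sup-amalgam}a); and ``$X^f_{a,b}\cap K^{(\alpha)}\ne\emptyset$ for every pair $a\ne b\in A_0^{n_1}$'' follows from ``$X^f_{a,b}\cap K^{(\mu_j+1)}\ne\emptyset$ for every pair and every $j$'' by Lemma~\ref{l:nonemptyintersection}a) with $C_1=A_0^{n_1}$, i.e. because nonemptiness of a decreasing net of compact sets is preserved. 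What cannot be propagated through a limit is the \emph{quantitative} bound $\cardinality{K^{(\alpha)}}>m$, since a decreasing net of infinite compact sets may have a one-point intersection; hence one carries only the qualitative ``nonempty for every pair'' up to level $\alpha$ and converts it into a cardinality estimate by the $\ell_\infty$-packing argument only at the end.
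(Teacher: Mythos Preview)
Your proof is correct and takes essentially the same approach as the paper: both enlarge $A_0^2$ to a set $A_0^{k}$ large enough that the $\ell_\infty$-packing argument on the $A_0$-coordinate forces $\cardinality{K^{(\alpha)}}>m$. The only difference is a shortcut for successor $\alpha$: there the paper uses the already-constructed space $M_k^{\alpha}$ (with $\overline{n}=(k)$, $h=1$) and reads the cardinality bound straight off Lemma~\ref{l:precise}(i), so no modification of the construction is needed; for limit $\alpha$ the paper does exactly what you do.
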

\begin{proof}
We find $k \in \Natural$ such that $C_D\log(k)> m$ and we put $n=2^{2+k}$.
Suppose first that $\alpha$ is a successor ordinal.   
Notice that if we consider $k$ as an element of $\max T_{\alpha+1}^{(\alpha)}$, we have $r_{\alpha+1}(k)=\alpha$. 
By Lemma~\ref{l:precise}, the space $M_{k}^{\alpha}$ as defined in Definition~\ref{d:metric} does not embed with distortion $D$ into $C([0,\omega^\alpha\cdot m])$. 
On the other hand, it embeds isometrically into $C([0,\omega^\alpha\cdot n])$ by Lemma~\ref{l:embedding}.

If $\alpha$ is a limit ordinal, we replace $A_0^2$ by $A_0^k$ in Definition~\ref{d:metric} and we consider the space 
\[
\widetilde{M}_\alpha:=(M^{\alpha_l+1}_\emptyset)_{l=1}^\infty/(\set{\basepoint}\cup A_0^k).
\]
As above, we see that $\widetilde{M}_\alpha$ embeds isometrically into $C([0,\omega^\alpha\cdot n])$.
By repeating the proof of Lemma~\ref{l:precise} we get that $\widetilde{M}_\alpha$ does not embed with distortion $D$ into $C([0,\omega^\alpha\cdot m])$. We leave the details to the reader.
\end{proof}

We recall that if $X$ and $Y$ are Banach spaces and $u:X \to Y$ is uniformly continuous, the following \emph{Lipschitz constant of $u$ at infinity}
\[
l_\infty(u)=\inf_{\eta>0}\sup_{\norm{x-x'}\geq \eta} \frac{\norm{u(x)-u(x')}}{\norm{x-x'}}
\]
is finite (sometimes called Corson-Klee lemma, see~\cite[Proposition 1.11]{BLbook}).
The \emph{uniform distance} between $X$ and $Y$ is $d_U(X,Y)=\inf l_\infty(u)\cdot l_\infty(u^{-1})$, where the infimum is taken over all uniform homeomorphisms between $X$ and $Y$.

A \emph{net} in a Banach space $X$ is a subset $\mathcal N$ of $X$ such that there exist $a,b>0$ which satisfy
\begin{itemize}
\item  for any $x,x' \in \mathcal N$ with $x\neq x'$, we have $\norm{x-x'}\geq a$ and,
\item  for any $x \in X$, there exists $y \in \mathcal N$ with $\norm{x-y}\leq b$.
\end{itemize}
We say that two Banach spaces are \emph{net-equivalent} when they have Lipschitz homeomorphic nets. The \emph{net distance} between $X$ and $Y$ is the number $d_N(X,Y)=\inf \dist(f)$ where the infimum is taken over all mappings $f:\mathcal N \to \mathcal M$ with $\mathcal N \subset X$ and $\mathcal M \subset Y$ being nets. Finally $d_L(X,Y)=\inf \dist(f)$ where the infimum is taken over all Lipschitz homeomorphisms $f:X \to Y$. 
It is well known and easy to see that for any couple of Banach spaces $X$ and $Y$ we have
\[
d_N(X,Y) \leq d_U(X,Y) \leq d_L(X,Y).
\]
\begin{cor}\label{c:AnswerGorak}
Let $\gamma\neq \alpha<\omega_1$ and $n,m \in \Natural$. Then $d_N(C([0,\omega^\gamma\cdot n]),C([0,\omega^\alpha\cdot m]))\geq 2$.
\end{cor}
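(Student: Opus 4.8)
The plan is to reduce the claim to the non\nobreakdash-embedding statement of Theorem~\ref{t:classification} via a dilation argument that decouples the fixed metric scale of $M_\alpha$ from the (uncontrollable) mesh of the nets. Since $\dist(f)=\dist(f^{-1})$ for any Lipschitz homeomorphism $f$, the quantity $d_N$ is symmetric in its two arguments, so I may assume $\gamma<\alpha$. I would first record the elementary Cantor--Bendixson computation $([0,\omega^\gamma\cdot n])^{(\gamma)}=\set{\omega^\gamma,2\omega^\gamma,\ldots,n\omega^\gamma}$, a finite (hence isolated) set, so that $([0,\omega^\gamma\cdot n])^{(\gamma+1)}=\emptyset$ and therefore $([0,\omega^\gamma\cdot n])^{(\alpha)}=\emptyset$ because $\alpha\geq\gamma+1$. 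By Theorem~\ref{t:classification}, $M_\alpha$ does not embed into $C([0,\omega^\gamma\cdot n])$ with distortion strictly less than $2$; since dilating a metric space by a positive constant changes neither the distortion of a map nor the set of achievable distortions, the dilate $\lambda M_\alpha$ also fails to embed into $C([0,\omega^\gamma\cdot n])$ with distortion $<2$, for every $\lambda>0$.

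\textbf{Putting $M_\alpha$ into the target.} On the other hand $M_\alpha$ is, by Theorem~\ref{t:classification} (or Lemma~\ref{l:embedding}), literally an isometric subset of $C([0,\omega^\alpha])$; and $[0,\omega^\alpha]$ is a clopen subset of $[0,\omega^\alpha\cdot m]$, so extension by $0$ gives an isometric embedding of $C([0,\omega^\alpha])$ into $C([0,\omega^\alpha\cdot m])$. Hence, for every $\lambda>0$, the space $Y:=C([0,\omega^\alpha\cdot m])$ contains an isometric copy of $\lambda M_\alpha$ which, by uniform discreteness of $M_\alpha$, has all mutual distances $\geq\lambda\delta$ for some fixed $\delta>0$ depending only on $M_\alpha$.

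\textbf{The dilation/net argument.} Suppose, for contradiction, that $d_N(C([0,\omega^\gamma\cdot n]),C([0,\omega^\alpha\cdot m]))<2$. Fix nets $\mathcal N\subset X:=C([0,\omega^\gamma\cdot n])$ and $\mathcal M\subset Y$ with net constant $b$, and a Lipschitz homeomorphism $f:\mathcal N\to\mathcal M$ with $D:=\dist(f)<2$. Fix $\lambda$ large (to be specified), place an isometric copy of $\lambda M_\alpha$ inside $Y$, and for each point $y$ of this copy pick $g(y)\in\mathcal M$ with $\norm{y-g(y)}\leq b$. As distinct points of $\lambda M_\alpha$ lie at distance $\geq\lambda\delta$, the map $g$ is injective once $\lambda\delta>2b$, and $\norm{y-y'}-2b\leq\norm{g(y)-g(y')}\leq\norm{y-y'}+2b$ gives (using $\norm{y-y'}\geq\lambda\delta$) that $\dist(g)\leq\frac{\lambda\delta+2b}{\lambda\delta-2b}\to 1$ as $\lambda\to\infty$. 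Then $f^{-1}\circ g:\lambda M_\alpha\to\mathcal N\subset X$ is a Lipschitz embedding with $\dist(f^{-1}\circ g)\leq\dist(f)\,\dist(g)$, which is strictly below $2$ once $\lambda$ is chosen large enough (possible precisely because $D<2$ strictly). This contradicts the first paragraph. Hence $d_N\geq 2$.

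\textbf{Main obstacle.} The only non-routine point is the dilation step: the net mesh $b$ is a fixed positive quantity one cannot shrink, so instead one blows up $M_\alpha$ until its minimal gap $\lambda\delta$ swamps $b$; this is legitimate exactly because $M_\alpha$ is uniformly discrete, sits in a Banach space (so $\lambda M_\alpha$ again embeds isometrically into $Y$), and distortion is scale-invariant (so the obstruction in $X$ persists). Everything else --- symmetry of $d_N$, the derived-set computation for $[0,\omega^\gamma\cdot n]$, and the clopen-subset remark giving $C([0,\omega^\alpha])\hookrightarrow C([0,\omega^\alpha\cdot m])$ --- I would dispatch in a line or two each.
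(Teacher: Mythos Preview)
Your proof is correct and follows essentially the same route as the paper: place (a dilate of) $M_\alpha$ in the ``large'' space, project to the net there with distortion tending to $1$, and use the net map to land in $C([0,\omega^\gamma\cdot n])$, contradicting Theorem~\ref{t:classification}. The paper phrases the argument as a slightly stronger one\nobreakdash-sided statement (no Lipschitz embedding of a net of $C([0,\omega^\alpha])$ into $C([0,\beta])$ for $\beta<\omega^\alpha$ with distortion $<2$) and leaves the Cantor--Bendixson computation and the inclusion $C([0,\omega^\alpha])\hookrightarrow C([0,\omega^\alpha\cdot m])$ implicit, but the mechanism is the same dilation trick you describe.
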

This corollary answers partially Problem~2 in~\cite{Gorak}.
\begin{proof}
In fact, we are going to prove the stronger claim that for $\beta<\omega^\alpha$ and for every net $\mathcal N$ in $C([0,\omega^\alpha])$ there is no Lipschitz embedding $f:\mathcal N \to C([0,\beta])$ such that $\dist(f)<2$. 

Let us suppose that such $f$ and $\mathcal N$ exist. Assume that $\mathcal N$ is an $(a,b)$-net and consider a mapping $\pi:C([0,\omega^\alpha]) \to \mathcal N$ such that $\norm{x-\pi(x)}\leq b$. 
Let us consider $M_\alpha$ as a subset of $C([0,\omega^\alpha])$ which we can by Theorem~\ref{t:classification}. Since $d(x,y)\geq 1$ for all $x\neq y \in M_\alpha$, we have, for $\lambda>2b$, that $\dist(\pi\restricted_{\lambda M_\alpha})\leq \left(1+\frac{2b}\lambda \right)\left(1+\frac{2b}{\lambda-2b}\right)$. Thus it is clear that for $\lambda$ large enough we have $\dist(g)<2$ for the embedding $g:M_\alpha \to C([0,\beta])$ defined as $g(x)=\frac1\lambda f(\pi(\lambda x))$ for $x\in M_\alpha$. According to  Theorem~\ref{t:classification}, such embedding cannot exist. Contradiction.
\end{proof}


Finally, we will give a lower bound on the Szlenk index of a Banach space $X$ that admits a certain $M_\alpha$ with distortion strictly less than $2$ (for the definition and properties of the Szlenk index, the reader can consult \cite{HMVZbook,Lancien}).

\begin{thm}\label{t:Szlenk}
Let $X$  be an Asplund space and assume that $M_{\omega^\alpha}$ embeds into $X$ with distortion strictly less than $2$ for an ordinal $\alpha < \omega_1$. Then $\Sz(X)\geq \omega^{\alpha+1}$.
\end{thm}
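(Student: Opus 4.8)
The plan is to combine the embedding hypothesis with the known relationship between the Szlenk index and the Cantor--Bendixson derivatives of weak${}^*$-compact sets, via Zippin's lemma. First I would recall the mechanism from \cite{ProSan}: since $X$ is Asplund, the dual ball $(B_{X^*},w^*)$ is weak${}^*$-metrizable and its Szlenk derivations are controlled by the weak${}^*$-topological derivations once we have a net-finite description of the relevant weak${}^*$-slices. The idea is that the metric space $M_{\omega^\alpha}$, sitting in $C([0,\omega^{\omega^\alpha}])$ by Theorem~\ref{t:classification}, has the property (encoded in Lemma~\ref{l:precise}(ii)) that for every embedding $f:M_{\omega^\alpha}\embeds{D}C(K)$ with $D<2$ one has $X^f_{1,2}\cap K^{(\omega^\alpha+1)}\neq\emptyset$, where $K^{(\cdot)}$ is the Cantor--Bendixson derivative. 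The point is to reinterpret this combinatorial/topological statement when the target is a general Asplund space rather than a $C(K)$.

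Concretely, suppose $f:M_{\omega^\alpha}\embeds{D}X$ with $D<2$. After an affine normalisation assume $f(\basepoint)=0$ and $d(x,y)\le\norm{f(x)-f(y)}\le D\,d(x,y)$. The family of functionals one should track is $\{x^*\in B_{X^*}:\abs{\duality{x^*,f(a)-f(b)}}\ge 4-2D\}$, exactly the analogue of $X^f_{a,b}$ but inside $(B_{X^*},w^*)$. The key step is to run the \emph{same} transfinite induction as in Lemma~\ref{l:precise}, but with the Cantor--Bendixson derivative $K^{(\beta)}$ replaced by the $\beta$-th Szlenk derivation $s^\beta_\varepsilon(B_{X^*})$ (for an appropriate $\varepsilon$ depending on $D$, e.g.\ $\varepsilon=4-2D$ or a fixed fraction thereof). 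The role of Lemma~\ref{l:nonemptyintersection} is played by the fact that a finite-codimensional restriction (to a subspace $M_n$ of the amalgam) together with a counting/separation argument in $\ell_\infty(\Gamma)$ forces many functionals to survive one more derivation; the passage from ``nonempty at level $\beta$'' to ``survives to level $\beta+1$'' uses that a $(4-2D)$-separated subset of $B_{\ell_\infty(\Gamma)}(0,D)$ of cardinality $k$ has $\cardinality{\Gamma}\ge C_D\log k$, precisely as in the proof of Lemma~\ref{l:precise}. Running this induction to the end yields $s^{\omega^\alpha+1}_\varepsilon(B_{X^*})\neq\emptyset$, hence $\Sz(X)>\omega^\alpha+1$; and since $X$ is Asplund its Szlenk index is a power of $\omega$ (a theorem of Lancien), so $\Sz(X)\ge\omega^{\alpha+1}$.

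The place where Zippin's lemma enters — and, I expect, the main obstacle — is making rigorous the claim that the ``test functionals'' $x^*$ produced by the induction actually lie in iterated \emph{Szlenk} derivations of $B_{X^*}$, rather than merely in iterated weak${}^*$-derived sets of some auxiliary compact set. In the $C(K)$ case the functionals are literally point evaluations $\delta_t$ and $K^{(\beta)}$ is a genuine Cantor--Bendixson derivative, so the amalgamation geometry transfers transparently; for a general Asplund $X$ one must instead argue that whenever a weak${}^*$-cluster point $x^*$ of a separated family of functionals has been found at derivation level $\beta$, the $(4-2D)$-separation of the values $\duality{x^*,f(a)-f(b)}$ witnesses that $x^*$ is not weak${}^*$-isolated in the relevant slice, i.e.\ it survives the $\varepsilon$-Szlenk derivation. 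Zippin's lemma is the tool that lets one embed the relevant countable ordinal structure into $X^*$ — roughly, it provides, for a space with $\Sz(X)\le\omega^\alpha$, a transfinite sequence of weak${}^*$-null trees of functionals along which the derivation behaves predictably, and contraposing it against the survival statement gives the lower bound. I would therefore carry out the proof in this order: (1) state precisely the Szlenk-derivation analogue of $X^f_{a,b}$ and the separation lemma; (2) re-prove the amalgamation induction of Lemma~\ref{l:precise} in this setting, pointing out that only SA1, SA2 and the finite counting argument are used, none of which needs a $C(K)$ structure; (3) conclude $s^{\omega^\alpha+1}_\varepsilon(B_{X^*})\neq\emptyset$; (4) invoke the structure theorem $\Sz(X)=\omega^\xi$ for Asplund $X$ to upgrade $\Sz(X)>\omega^\alpha+1$ to $\Sz(X)\ge\omega^{\alpha+1}$. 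The subtle bookkeeping — and the step most likely to need care — is ensuring the $\varepsilon$ in the Szlenk derivation can be kept \emph{fixed} (independent of the level $\beta$) throughout the whole transfinite induction, since the separation constant $4-2D$ is fixed but the diameters of the amalgamated pieces, though uniformly bounded, interact with the normalisation $\lambda$ in the net-stabilisation step; I would handle this exactly as in Corollary~\ref{c:AnswerGorak}, by first passing to $\lambda M_{\omega^\alpha}$ for $\lambda$ large and absorbing the distortion loss.
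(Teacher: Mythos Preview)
Your strategy is quite different from the paper's, and as stated it has a real gap.

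The paper does \emph{not} redo the transfinite induction of Lemma~\ref{l:precise} inside $B_{X^*}$ with Szlenk derivations. Instead it uses Zippin's lemma in a much more direct way: after passing to a separable subspace containing the image of $M_{\omega^\alpha}$, Zippin's lemma produces a compact $K$, an ordinal $\beta<\omega^{\Sz(X,\varepsilon/8)+1}$, a subspace $Y\subset C(K)$ isometric to $C([0,\beta])$, and a $(1+\varepsilon)$-embedding $i:X\to C(K)$ with $\dist(i(x),Y)\le 2\varepsilon\norm{i(x)}$. Composing with $i$ and then approximating each point of the (uniformly discrete) image in $Y$ gives a map $M_{\omega^\alpha}\to C([0,\beta])$ whose distortion is still strictly below $2$ for $\varepsilon$ small. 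Now Theorem~\ref{t:classification} is applied as a black box to force $\beta\ge\omega^{\omega^\alpha}$, hence $\Sz(X)>\omega^\alpha$, hence $\Sz(X)\ge\omega^{\alpha+1}$. No Szlenk-derivation induction is needed.

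The gap in your direct approach is the step ``many witnesses at level $\beta$ $\Rightarrow$ a witness at level $\beta+1$''. In the $C(K)$ setting this works because the witnesses are point evaluations $\delta_t$: having infinitely many \emph{distinct} points in the closed set $\bigcap_i X^f_{a_i,b_i}\cap K^{(\beta)}$ forces an accumulation point, which lies in $K^{(\beta+1)}$. For the Szlenk derivation $s_\varepsilon$, distinctness is not enough: you must show that every weak${}^*$-neighbourhood of the limit functional in $s^\beta_\varepsilon(B_{X^*})$ has norm-diameter $>\varepsilon$, i.e.\ that the witness functionals are $\varepsilon$-\emph{norm}-separated, not merely distinct. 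The counting argument you invoke (a $(4-2D)$-separated set in $B_{\ell_\infty(\Gamma)}(0,D)$ forces $\cardinality{\Gamma}\ge C_D\log k$) only yields many \emph{different} $\gamma\in\Gamma$; it says nothing about $\norm{\gamma-\gamma'}$ for $\gamma\neq\gamma'$. Your sentence ``the $(4-2D)$-separation of the values $\duality{x^*,f(a)-f(b)}$ witnesses that $x^*$ survives the $\varepsilon$-Szlenk derivation'' conflates separation of \emph{evaluations} with separation of \emph{functionals}. Also, your description of Zippin's lemma (``a transfinite sequence of weak${}^*$-null trees of functionals'') is not what the lemma provides; it is an embedding theorem into $C(K)$ with controlled distance to a $C([0,\beta])$ subspace, which is precisely what lets the paper avoid your difficulty altogether.
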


For the proof we will need the following version of Zippin's lemma as presented in~\cite[page 27]{Benyamini}, see also~\cite[Lemma~5.11]{Rosenthal}.
\begin{Zippin}
Let $X$ be a separable Banach space with separable dual and let $\frac12>\varepsilon>0$. Then there exist a compact $K$, an ordinal $\beta < \displaystyle \omega^{Sz(X,\frac\varepsilon8)+1}$, a subspace $Y$ of $C(K)$, isometric to $C([0,\beta])$ and an embedding $i:X \to C(K)$ with $\norm{i}\norm{i^{-1}}<1+\varepsilon$ such that for any $x\in X$ we have \[  \dist(i(x),Y)\leq 2\varepsilon\norm{i(x)}. \]
\end{Zippin}

\begin{proof}
Let us assume that $M_{\omega^\alpha} \embeds{D} X$ with $D<2$.
Let $\varepsilon>0$ be small enough so that $D'=D(1+\varepsilon)<2$ and also that for $\eta:=2\varepsilon D'$ we have $\displaystyle \frac{1+4\varepsilon }{1-2\eta}D'<2$. Let $K$ and $\ds\beta<\omega^{\Sz(X,\frac\varepsilon8)+1}$ be as in Zippin's lemma.
Then $M_{\omega^\alpha}$ embeds into $C(K)$ with distortion $D'<2$ via some embedding $g$ such that $d(x,y)\leq \norm{g(x)-g(y)}\leq D'd(x,y)$ and, without loss of generality, that $g(\basepoint)=0$. 
Thus for every $x \in M_{\omega^\alpha}$ we have $\norm{g(x)}\leq 2D'$.
We know that for each $x \in M_{\omega^\alpha}$ there is $f(x) \in C([0,\beta])$ such that $\norm{g(x)-f(x)}\leq \eta$.
This implies that $\norm{g(x)-g(y)}-2\eta\leq \norm{f(x)-f(y)}\leq \norm{g(x)-g(y)}+2\eta$.
Now since $1\leq d(x,y)$ we have 
\[
 d(x,y)(1-2\eta)\leq \norm{f(x)-f(y)}\leq d(x,y)D'(1+4\varepsilon).
\]
This proves that $f$ is a Lipschitz embedding of $M_{\omega^\alpha}$ into $C([0,\beta])$ with distortion strictly less than $2$ and so, according to Theorem~\ref{t:classification}, we have $\beta \geq \omega^{\omega^\alpha}$. 
This implies that $\Sz(X)>\omega^\alpha$ and so $\Sz(X) \geq \omega^{\alpha+1}$ by~\cite[Theorem 2.43]{HMVZbook}.
\end{proof}

An interesting immediate consequence of the above theorem is the fact that, for every $\gamma<\alpha<\omega_1$ and for every equivalent norm  $\abs{\cdot}$ on $C([0,\omega^{\omega^\gamma}])$, the space $M_{\omega^\alpha}$ does not embed with distortion strictly less than $2$ into $(C([0,\omega^{\omega^\gamma}]),\abs{\cdot})$.

\end{document}